\newtheorem{theorem}{Theorem}
\newtheorem{lemma}[theorem]{Lemma}
\newtheorem{corol}[theorem]{Corollary}
\theoremstyle{definition}
\theoremstyle{remark}
\newtheorem{remark}[theorem]{Remark}
\newcommand{\p}{\mathbb{P}}
\newcommand{\e}{\mathbb{E}}
\newcommand{\reals}{\mathbb{R}}
\newcommand{\ind}{\mathbf{1}}
\newcommand{\me}{\mathrm{e}}
\newcommand{\md}{\mathrm{d}}
\newcommand{\kappaq}{\kappa _{\lambda}}
\newcommand{\parisiantime}{\tilde{\kappa}}
\def\beq{\begin{eqnarray}} \def\eeq{\end{eqnarray}}
\def\al*#1{\begin{align*}#1\end{align*}}
\def\ga*#1{\begin{gather*}#1\end{gather*}}
\def\alat*#1#2{\begin{alignat*}{#1}#2\end{alignat*}}
\def\bea{\begin{eqnarray*}}
\def\eea{\end{eqnarray*}}
\def\ml*#1{\begin{multline*}#1\end{multline*}}
\begin{document}

\title[]{A note on Parisian ruin under a hybrid observation scheme}
\author[]{Mohamed Amine Lkabous}
\address{D\'epartement de math\'ematiques, Universit\'e du Qu\'ebec \`a Montr\'eal (UQAM), 201 av.\ Pr\'esident-Kennedy, Montr\'eal (Qu\'ebec) H2X 3Y7, Canada}
\email{lkabous.mohamed\_amine@courrier.uqam.ca}
\date{\today}
\begin{abstract}
In this paper, we study the concept of Parisian ruin under the hybrid observation scheme model introduced by Li et al. \cite{binetal2016}. Under this model, the process is observed at Poisson arrival times whenever the business is financially healthy and it is continuously observed when it goes below $0$. The Parisian ruin is then declared when the process stays below zero for a consecutive period of time greater than a fixed delay. We improve the result originally obtained in \cite{binetal2016} and we compute other fluctuation identities. All identities are given in terms of second-generation scale functions.
\end{abstract}

\keywords{Parisian ruin, hybrid observation scheme, Lévy insurance risk processes, second-generation scale functions.}
%\subjclass[2000]{}
\maketitle

\maketitle

\section{Introduction}
Parisian ruin is a relaxation of the classical ruin since a delay is allowed before ruin occurs. More precisely, Parisian ruin occurs if the time spent below a pre-determined critical level for a consecutive period is greater than a pre-specified delay. Two types of Parisian ruin have been considered, one with fixed delays and another one with stochastic delays. Lkabous and Renaud \cite{lkabous_renaud2017} unified these two types of Parisian ruin into one called \textit{mixed Parisian ruin}. In this case, ruin is declared the first time an excursion into the red zone lasts longer than an implementation delay with a deterministic and a stochastic component. Recently, more definitions of Parisian ruin have been proposed. Cumulative Parisian ruin has been proposed in \cite{guerin_renaud_2015} and \cite{lkabous-renaud2018risks},; in that case, the \textit{race} is between a single deterministic clock and the sum of the excursions below the critical level. Moreover, in \cite{czarna_Renaud2015}, Parisian ruin with an ultimate bankruptcy level for Lévy insurance risk processes was considered. This type of ruin occurs if either the process goes below a predetermined negative level or if Parisian ruin with deterministic delays occurs.

Recently, Poisson observations problems have attracted considerable attention. In this case, the risk process is  monitored discretely at arrival epochs of an independent Poisson process, which can be interpreted as the observation times of the regulatory body, see Albrecher et al. \cite{albrecheretal2016} and Li et al. \cite{binetal2016} among others. A new definition of ruin has been studied in a spectrally negative Lévy setup by Li et al. \cite{binetal2016}. They introduced the idea of Parisian ruin under a hybrid observation scheme. More specifically, when the risk process is above $0$, it is monitored discretely at Poisson arrival times until a negative surplus is observed. Then, the process will be observed continuously and a grace period is granted for the insurance company to recover to a solvable level $a\geq 0$. Otherwise, Parisian ruin occurs.

In this paper, we study a Parisian ruin under a hybrid observation scheme for a general Lévy insurance risk model as defined in \cite{binetal2016}. We present a probabilistic analysis and simple resulting expressions for the two-sided exit problem, Laplace transform and the probability of Parisian ruin under a hybrid observation scheme in terms of the delayed scale functions introduced by Lkabous and Renaud \cite{lkabous_renaud2017}. Our approach is based on the expression of the Gerber–Shiu distribution at Parisian ruin with exponential implementation delays obtained in \cite{baurdoux_et_al_2015}, combined with some results in \cite{lkabousetal2016}.

The rest of the paper is organized as follows. In Section 2, we present the necessary background material on spectrally negative L\'evy processes and classical scale functions, including delayed scale functions and some fluctuation identities with delays already available in the literature. The main results are presented in Section 3, followed by a discussion on those results. In Section 4, we derive new technical identities and then provide proofs for the main results.  In the last section, we provide explicit computations of the probability of Parisian ruin under a hybrid observation scheme for Brownian risk model.
\subsection{Parisian ruin under a hybrid observation scheme}
 First, we denote the first passage times of $X$ above $b$
\begin{equation*}
\tau_{b}^{+}=\inf \left\{t \geq 0:\text{ }X_{t}\geq b \right\} .
\end{equation*}
For a standard L\'evy insurance risk process $X$, the time of Parisian ruin, with delay $r>0$, has been studied in \cite{loeffenetal2013}: it is defined as 
\begin{equation}\label{kappar}
\kappa_r = \inf \left\lbrace t > 0 \colon t - g_t > r \right\rbrace ,
\end{equation}
where $g_t = \sup \left\lbrace 0 \leq s \leq t \colon X_s \geq 0
\right\rbrace$. Then, Parisian ruin occurs the first time an excursion below zero lasts longer than the fixed implementation delay $r$. Loeffen et al. \cite{loeffenetal2013} obtained a very nice and compact expression for the probability of Parisian ruin : for $r>0$ and $x \in \reals$, we have
\begin{equation}\label{SNLPPr}
\p_{x} \left(\kappa_{r}<\infty \right) = 1 - \left(\e[X_{1}]\right))_{+} \frac{\int^{\infty}_{0}W(x+z)z\p \left( X_{r} \in \md z \right)}{\int^{\infty}_{0} z\p \left( X_{r} \in \md z \right)},
\end{equation}
where $(x)_{+}=\max(x,0)$ and $W$ is the so-called scale function of $X$ (see the definition in the next section).

Parisian ruin with stochastic delay has also been considered in
\cites{landriaultetal2011,landriaultetal2014,baurdoux_et_al_2015}. In this case, the fixed delay $r$ is replaced by an independent exponential random and it occurs the first time when the length an excursion below $0$ is longer than the exponential clock. 
%Then, this type of Parisian ruin is defined as 
%\begin{equation*}
%\kappaq=\inf \left\{ t>0\text{ | }t-g_{t}>\me^{g_{t}}_{\lambda} \right\},
%\end{equation*}
%where $\me^{g_{t}}_{\lambda}$ is exponentially distributed with rate $\lambda>0$.
It also corresponds to the  first passage time when $X$ is observed at Poisson arrival times below $0$, that is
\begin{align}\label{T}
T_{0}^- &= \min\{T_{i}>0 \colon X_{T_{i}}<0, \textit{ }i\in \mathbb{N} \}, 
\end{align}
where $T_{i}$ are the arrival times of an independent Poisson process of rate $\lambda>0$. We will use the notation $T_{0}^-$ instead of $\kappaq$. %In this case, the probability of Parisian ruin was first computed in \cite{landriaultetal2011} and is given by
%\begin{equation*}
%\p_{x}\left( T_{0}^- <\infty\right)=1 - \left(\e[X_{1}]\right))_{+} \frac{\Phi_q}{q} Z \left(x,\Phi_q \right). 
%\end{equation*}

In \cite{binetal2016}, the time of Parisian ruin under a hybrid observation scheme with recover barrier $a\geq 0$ and a fixed delay $r>0$ is defined as
\begin{equation*}
\parisiantime_{a,r}^{\lambda }=\inf \left\{ t\in \left( T_{n},\tau_{a}^{+}\circ
\theta _{T_{n}}\right) :X_{T_{n}}<0\text{ and }t-T _{n}\geq r \text{, }n\in \mathbb{N}
\right\},
\end{equation*}
where $\theta$ is the Markov shift operator ($X_s\circ \theta_{t}=X_{s+t}$). In other words, when the risk process is above the level $a\geq 0$, it is monitored discretely at Poisson arrival times until a negative surplus is observed. Then, the process will be observed continuously and a fixed delay $r$ is granted for the insurance company to recover to the solvable level $a$. 

Li et al. \cite{binetal2016} obtained the following expression for the probability of Parisian ruin under a hybrid observation scheme using a standard probabilistic decomposition and using the technique of taking Laplace transform with respect to the delay $r$.
\begin{theorem}\label{Liformula}
For $r,\lambda>0$, $a\geq 0$ and $x \in \reals $, we have
\begin{equation*}
\mathbb{P}_{x}\left( \parisiantime_{a,r}^{\lambda }<\infty \right) =1-\psi ^{\prime }\left(
0+\right) \frac{\Phi _{\lambda }}{\lambda }Z\left(x,\Phi _{\lambda
}\right) -\psi ^{\prime }\left( 0+\right) \frac{\Phi _{\lambda }}{\lambda 
}\frac{Z\left( a,\Phi _{\lambda }\right) \lambda \int_{0}^{r}\mathrm{e}%
^{\lambda \left( r-s\right) }g_{x,a,\lambda }\left( s\right) \mathrm{d}s}{%
1-\lambda \int_{0}^{r}\mathrm{e}^{\lambda \left( r-s\right) }g_{a,a,\lambda
}\left( s\right) \mathrm{d}s},
\end{equation*}
where 
\begin{equation*}
g_{x,a,\lambda }\left( s\right) =\int_{a}^{\infty }\left( \frac{\Phi
_{\lambda }}{\lambda }Z\left( x,\Phi _{\lambda }\right) -W\left(
x+z-a\right) \right) \frac{z}{r}\mathbb{P}\left( X_{r}\in \mathrm{d}z\right).\end{equation*}
\end{theorem}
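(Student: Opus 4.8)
The plan is to turn the Poisson observations into a regenerative structure and reduce everything to a one-step renewal identity. Write $S(x)=\mathbb{P}_{x}(\parisiantime_{a,r}^{\lambda}=\infty)$ for the survival probability, so the claim is equivalent to an explicit formula for $S$. The natural regeneration epoch is the first Poisson arrival $T_{0}^{-}$ at which a negative surplus is recorded, defined in \eqref{T}: before $T_{0}^{-}$ no grace period is ever opened, and at $T_{0}^{-}$ the clock of length $r$ starts. Conditioning on $T_{0}^{-}$ splits $\{\parisiantime_{a,r}^{\lambda}=\infty\}$ into three disjoint scenarios: (i) $T_{0}^{-}=\infty$, so no negative surplus is ever observed, contributing $A(x):=\mathbb{P}_{x}(T_{0}^{-}=\infty)$; (ii) $T_{0}^{-}<\infty$ and the surplus climbs back to the recovery level $a$ before the delay elapses, i.e.\ $\tau_{a}^{+}\circ\theta_{T_{0}^{-}}<r$, after which the scheme restarts from $a$; and (iii) $T_{0}^{-}<\infty$ but recovery fails within the delay, which is exactly Parisian ruin.

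Because a spectrally negative process creeps continuously across $a$, at the recovery instant the surplus equals $a$ exactly, so by the strong Markov property scenario (ii) yields an independent copy of the problem started from $a$. Setting $R(x,r):=\mathbb{P}_{x}(T_{0}^{-}<\infty,\ \tau_{a}^{+}\circ\theta_{T_{0}^{-}}<r)$, the decomposition gives the renewal equation $S(x)=A(x)+R(x,r)\,S(a)$. Taking $x=a$ yields $S(a)=A(a)/(1-R(a,r))$, and substituting back produces the announced shape $S(x)=A(x)+R(x,r)\,A(a)/(1-R(a,r))$. It then remains to identify $A(x)=\psi'(0+)\tfrac{\Phi_{\lambda}}{\lambda}Z(x,\Phi_{\lambda})$ and $R(x,r)=\lambda\int_{0}^{r}\me^{\lambda(r-s)}g_{x,a,\lambda}(s)\,\md s$.

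The first block is immediate from the equivalence recalled after \eqref{T}: observing $X$ below $0$ at rate-$\lambda$ Poisson times is the same as declaring Parisian ruin when an excursion below $0$ outlasts an independent $\mathrm{Exp}(\lambda)$ clock, so $A(x)=\mathbb{P}_{x}(T_{0}^{-}=\infty)$ is the survival probability of exponentially-delayed Parisian ruin, whose closed form $\psi'(0+)\tfrac{\Phi_{\lambda}}{\lambda}Z(x,\Phi_{\lambda})$ is available from \cite{landriaultetal2011,baurdoux_et_al_2015}. For the second block I would apply the strong Markov property at $T_{0}^{-}$, disintegrating $R(x,r)$ over the recorded deficit $X_{T_{0}^{-}}$ via the scale-function representation of the surplus at $T_{0}^{-}$ together with a two-sided exit identity, from which the combination $\tfrac{\Phi_{\lambda}}{\lambda}Z(x,\Phi_{\lambda})-W(x+z-a)$ emerges, and then evaluate the remaining within-delay first passage to $a$. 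Following the hint, I would handle the fixed delay by passing to the Laplace transform in $r$: replacing the deterministic delay by an independent exponential clock of rate $q$ makes the recovery a first-passage problem with a closed scale-function answer, and Kendall's identity $s\,\mathbb{P}(\tau_{z}^{+}\in\md s)=z\,\mathbb{P}(X_{s}\in\md z)$ rewrites the first-passage time density as $\tfrac{z}{s}\mathbb{P}(X_{s}\in\md z)$, producing $g_{x,a,\lambda}$. In the transform variable $q$ the successful-recovery term carries a factor $\lambda/(q-\lambda)$ coming from the mismatch between the observation rate $\lambda$ and the grace-period rate $q$, and since $\lambda/(q-\lambda)$ inverts to the kernel $\lambda\me^{\lambda(\cdot)}$, inverting the transform restores the convolution $\lambda\int_{0}^{r}\me^{\lambda(r-s)}g_{x,a,\lambda}(s)\,\md s$.

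The main obstacle is the evaluation of $R(x,r)$. Two points need care. First, one must disentangle what happens at $T_{0}^{-}$ — the joint law of the epoch and the recorded deficit, with the clock being reset there — from the subsequent continuously-observed recovery, so that the Poisson rate $\lambda$ enters only through the kernel $\lambda\me^{\lambda(r-s)}$ and not through the recovery dynamics. Second, the Laplace inversion in $r$ must be carried through so that the resulting fixed-delay object $g_{x,a,\lambda}$ is exactly the one built from the Loeffen et al.\ fixed-delay mechanism behind \eqref{SNLPPr}; verifying that the inversion returns precisely the stated convolution, and not merely an equivalent integral, is the delicate bookkeeping. Everything else reduces to assembling known scale-function identities from \cite{baurdoux_et_al_2015} and \cite{lkabousetal2016}.
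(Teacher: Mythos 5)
The paper does not actually prove Theorem~\ref{Liformula}: it is quoted from Li et al.\ \cite{binetal2016}, so there is no in-paper proof to compare against. The closest analogue is the proof of Theorem~\ref{ruinLap} in Section~\ref{S:proofs}, whose skeleton --- equations \eqref{xnegLap} through \eqref{lapx=0} --- is precisely your renewal equation $S(x)=A(x)+R(x,r)S(a)$, written there for the discounted quantity with $a=0$. Your decomposition at the first negative Poisson observation $T_0^-$ of \eqref{T} and then at the continuous first passage back to $a$ is legitimate: no observation epoch $T_m$ inside the grace window can trigger ruin if the first grace period succeeds, since then $\tau_a^+\circ\theta_{T_m}=\tau_a^+\circ\theta_{T_0^-}-(T_m-T_0^-)<r$, and the absence of positive jumps places the process exactly at $a$ at the regeneration instant, so the strong Markov property yields a genuine copy of the problem started from $a$. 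Solving the renewal equation reproduces the announced algebraic shape, and $A(x)=\psi'(0+)\tfrac{\Phi_\lambda}{\lambda}Z(x,\Phi_\lambda)$ is indeed the standard exponential-delay Parisian survival probability.

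The one step you leave as a plan rather than a proof is the identification $R(x,r)=\lambda\int_0^r\me^{\lambda(r-s)}g_{x,a,\lambda}(s)\,\md s$, which is the substance of the theorem. Your proposed route does close it: replacing $r$ by an independent exponential clock $\exptime$ and using \eqref{Laptrans} together with the undiscounted deficit law \eqref{G-S-Parisian2} gives
\begin{equation*}
\mathbb{E}\bigl[R(x,\exptime)\bigr]=\me^{-\Phi_q a}\Bigl(\frac{q\,\Phi_\lambda Z(x,\Phi_\lambda)}{\Phi_q(q-\lambda)}-\frac{\lambda Z(x,\Phi_q)}{q-\lambda}\Bigr),\qquad q>\lambda,
\end{equation*}
upon computing $\int_0^\infty\me^{-\Phi_q u}Z_\lambda(u)\,\md u=q/(\Phi_q(q-\lambda))$ and invoking \eqref{LapscriptW}; Kendall's identity \eqref{eq:Kendall} and \eqref{Zv2} show that your candidate $\lambda\int_0^r\me^{\lambda(r-s)}g_{x,a,\lambda}(s)\,\md s$ has exactly the same transform, the factor $\lambda/(q-\lambda)$ inverting to the kernel $\lambda\me^{\lambda(r-s)}$ as you anticipate. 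Two caveats you should make explicit. First, this only works if $g_{x,a,\lambda}(s)$ carries $\tfrac{z}{s}\mathbb{P}(X_s\in\md z)$; the $\tfrac{z}{r}\mathbb{P}(X_r\in\md z)$ printed in the statement makes $g$ independent of $s$ and is a typo you should not reproduce. Second, the whole identity presupposes $\psi'(0+)>0$; otherwise $A\equiv0$, the renewal equation correctly gives $S\equiv0$, but the displayed formula needs the $(\cdot)_+$ convention of \eqref{SNLPPr}. With the transform computation written out and these two points addressed, your argument is a complete and correct proof, and it appears to follow essentially the same ``probabilistic decomposition plus Laplace transform in $r$'' route attributed to \cite{binetal2016}.
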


We want to improve on this result by making it more close to Equation \eqref{SNLPPr} using a probabilistic approach. Without loss of generality, we will assume that $a=0$ and we will write $\parisiantime_{0,r}^{\lambda }=\parisiantime_{r}^{\lambda }$.
\section{L\'{e}vy insurance risk processes}

We say that $X=\{X_t,t\geq 0\}$ is a L\'{e}vy insurance risk process if it
is a spectrally negative L\'evy process (SNLP) on the filtered probability
space $(\Omega,\mathcal{F},\{\mathcal{F}_t , t\geq0\}, \mathbb{P})$, that is
a process with stationary and independent increments and no positive jumps.
To avoid trivialities, we exclude the case where $X$ has monotone paths.

As the L\'{e}vy process $X$ has no positive jumps, its Laplace transform
exists: for all $\lambda, t \geq 0$, 
\begin{equation*}
\e \left[ \mathrm{e}^{\lambda X_t} \right] = \mathrm{e}^{t \psi(\lambda)} , 
\end{equation*}
where 
\begin{equation*}
\psi(\lambda) = \gamma \lambda + \frac{1}{2} \sigma^2 \lambda^2 +
\int^{\infty}_0 \left( \mathrm{e}^{-\lambda z} - 1 + \lambda z \ind%
_{(0,1]}(z) \right) \Pi(\mathrm{d}z) , 
\end{equation*}
for $\gamma \in \reals$ and $\sigma \geq 0$, and where $\Pi$ is a $\sigma$%
-finite measure on $(0,\infty)$ called the L\'{e}vy measure of $X$ such that 
\begin{equation*}
\int^{\infty}_0 (1 \wedge z^2) \Pi(\mathrm{d}z) < \infty . 
\end{equation*}
Finally, note that 
$\e \left[ X_1 \right] = \psi^{\prime }(0+)$. We will use the standard
Markovian notation: the law of $X$ when starting from $X_0 = x$ is denoted
by $\p_x$ and the corresponding expectation by $\e_x$. We write $\p$ and $\e$
when $x=0$.%
%Finally, for a random variable $Z$ and an event $A$, $\e [Z;A] := \e [Z \ind_A]$.\\

We recall Kendall's identity (see \cite[Corollary VII.3]{bertoin1996}): on $(0,\infty) \times (0,\infty)$, we have
\begin{equation}\label{eq:Kendall}
r \p(\tau_z^+ \in \mathrm{d}r) \mathrm{d}z = z \p(X_r \in \mathrm{d}z) \mathrm{d}r .
\end{equation}
Finally, we have the well-known expression for the Laplace transform of the first passage time above $b$
\begin{equation}\label{Laptrans}
\mathbb{E}_{x}\left[ \mathrm{e}^{-q \tau _{b}^{+}}\mathbf{1}_{\left\{ \tau _{b}^{+}<\infty \right\} }\right] =\me^{\Phi_q\left( x-b\right)}.
\end{equation}
\subsection{Scale functions}
We now present the definition of the scale functions $W_{q}$ and $Z_{q}$
of $X$. First, recall that there exists a function $\Phi \colon [0,\infty)
\to [0,\infty)$ defined by $\Phi_{q} = \sup \{ \lambda \geq 0 \mid
\psi(\lambda) = q\}$ (the right-inverse of $\psi$) such that 
\begin{equation*}
\psi ( \Phi_{q} ) = q, \quad q \geq 0 . 
\end{equation*}%
%We have that $\Phi(q)=0$ if and only if $q=0$ and $\psi'(0+)\geq0$.
Now, for $q \geq 0$, the $q$-scale function of the process $X$ is defined as
the continuous function on $[0,\infty)$ with Laplace transform 
\begin{equation}  \label{def_scale}
\int_0^{\infty} \mathrm{e}^{- \lambda y} W_{q} (y) \mathrm{d}y = \frac{1}{%
\psi_q(\lambda)} , \quad \text{for $\lambda > \Phi_q$,}
\end{equation}
where $\psi_q(\lambda)=\psi(\lambda) - q$.
 This function is unique, positive and strictly increasing for $x\geq0$ and
is further continuous for $q\geq0$. We extend $W_{q}$ to the whole real
line by setting $W_{q}(x)=0$ for $x<0$. We write $W = W_{0}$ when $q=0$.\\
We also define another scale function $Z_{q}(x,\theta )$ by
\begin{equation}
Z_{q}(x,\theta )=\me^{\theta x}\left( 1-\psi_q (\theta )
\int_{0}^{x}\me^{-\theta y}W_{q}(y)\mathrm{d}y\right) ,\quad x\geq 0,
\end{equation}%
and $Z_{q}(x,\theta )=\me^{\theta x}$ for $x<0$. For $\theta=0$, we get 
\begin{equation}  \label{eq:zqscale}
Z_{q}(x,0)=Z_{q}(x) = 1 + q \int_0^x W_{q}(y)\mathrm{d }y, \quad x \in \mathbb{%
R}.
\end{equation}
Using \eqref{def_scale}, we can also re-write the scale function $Z_{q}(x,\theta )$ as follows 
\begin{equation}\label{Zv2}
Z_{q}(x,\theta )=\psi_q (\theta )\int_{0}^{\infty}\me^{-\theta y}W_{q}(x+y)\mathrm{d}y ,\quad x\geq 0,\textit{ }\theta\geq \Phi_q.
\end{equation}
It is known that, for any $x\in \reals$, 
\begin{equation}
\lim_{z\rightarrow \infty }\dfrac{W_{q}(x+z)}{W_{q}(z)}=\me^{\Phi_{q} x},
\label{CM1}
\end{equation}
and for $\theta>\Phi_q$,
\begin{equation}
\lim_{x\rightarrow \infty }\frac{Z_{q}(x,\theta)}{
W_{q}\left( x\right) }=\frac{\psi_q(\theta)}{\theta-\Phi_{q}}.  \label{limitZW}
\end{equation}
For the sake of compactness of the results, we will use the following functions. For $p,p+q\geq 0$ and $x \in \reals$, we set
\begin{align}\label{eq:convsnlp}
\mathcal{W}_{a}^{\left( p,q\right) }\left( x\right) &= W_{p}\left( x\right) +q\int_{a}^{x}W_{p+q}\left( x-y\right)W_{p}\left( y\right) \md y \nonumber\\ &= W_{p+q}\left( x\right)-q\int_{0}^{a}W_{p+q}\left( x-y\right) W_{p}\left( y\right) \md y,
\end{align}
where the second equation follows using the following identity obtained in \cite{loeffenetal2014}
$$
(s-p) \int_0^x W_p(x-y) W_s(y) \mathrm{d}y = W_s(x) - W_p(x).
$$
For later use, note that we can show 
\begin{equation}\label{LapscriptW}
\int_{0}^{\infty} \me^{-\theta z} \mathcal{W}_{a}^{\left( p,s\right)} \left(a+z\right) \mathrm{d}z = \frac{Z_{p}\left( a,\theta\right)}{ \psi_{p+s}(\theta)}, \quad \theta >\Phi_{p+s} .
\end{equation}
Finally, we have the following useful identities taken from \cite{albrecheretal2016} and \cite{MR3257360} and : for $\theta,p,q \geq 0$ 
\begin{equation}
\left( p-q\right) \int_{0}^{a}W_{p}\left( a-x\right) Z_{q}\left( x,\theta
\right) \md x=Z_{p}\left( a,\theta \right) -Z_{q}\left( a,\theta \right),
\label{EQ2}
\end{equation}
and for $s\geq 0$, $x>a$
\begin{equation}\label{convolution2}
(s-(p+q)) \int_a^x W_{s}(x-y)\mathcal{W}_{a}^{\left( p,q\right) }\left(y\right) \mathrm{d}y \\
=\mathcal{W}_{a}^{\left(p,s-p\right) }\left(x\right)-\mathcal{W}_{a}^{\left( p,q\right) }\left(x\right).
\end{equation}
\subsection{Delayed Scale functions}
Introduced by Lkabous and Renaud \cite{lkabous_renaud2017}, we use the \textit{$(r,s)$-delayed $p$-scale function of $X$} defined by
\begin{equation}\label{Exp1}
\Lambda^{\left( p\right)} \left( x;r,s\right) = \int_{0}^{\infty } \mathcal{W}_{z}^{\left( p+s,-s\right)} \left( x+z\right) \frac{z}{r} \mathbb{P} \left(X_{r} \in \mathrm{d}z \right),
\end{equation}
and using \eqref{eq:convsnlp}, we can rewrite $\Lambda^{\left( p\right)} \left( x;r,s\right)$ in the form
\begin{equation}\label{Exp2}
\Lambda^{\left( p\right)} \left( x;r,s\right)= \int_{0}^{\infty } \mathcal{W}_{x}^{\left( p,s\right)} \left( x+z\right) \frac{z}{r} \mathbb{P} \left(X_{r} \in \mathrm{d}z \right) .
\end{equation}
When $s=0$, we recover the function $\Lambda^{(p)}$ originally defined in \cite{loeffenetal2017}. To be more precise, when $s=0$, we have $\Lambda^{\left( p\right)} \left(x;r,0\right) = \Lambda^{\left( p\right)}\left( x,r\right)$, where
\begin{equation}\label{DS2}
\Lambda^{\left( p\right)}\left( x,r\right) = \int_{0}^{\infty} W_p\left( x+z\right) \frac{z}{r} \mathbb{P} \left( X_{r} \in \mathrm{d}z \right) ,
\end{equation}
and we write $\Lambda=\Lambda^{(0)}$ when $q=0$.\\
%$$
%\Lambda^{(p,0)} (x,r) = \int_{0}^{\infty} W^{(p)}(x+z) \frac{z}{r} \mathbb{P} \left( X_{r} \in \mathrm{d}z \right) .
%$$
The delayed scale function \eqref{Exp1} appeared in \cite{lkabous_renaud2017} as the solution of two-sided exit problem for Parisian ruin with \textit{mixed delays}  : for $p \geq 0$, $b,r,\lambda>0$ and $x \leq b$ 
\begin{equation}\label{LaplaceTr3}
\mathbb{E}_{x} \left[ \mathrm{e}^{-p \tau_{b}^{+}} \mathbf{1}_{\left\{ \tau_{b}^{+} < \kappa_{r}^{\lambda} \right\}} \right] =\frac{\Lambda^{\left( p\right) }\left( x;r,\lambda\right)}{\Lambda^{\left( p\right) }\left( b;r,\lambda\right)},
\end{equation}
where $\kappa_{r}^{\lambda} $ is defined as
\begin{equation}  \label{kqr}
\kappa_{r}^{\lambda}=T^{-}_{0}\wedge \kappa_{r}.
\end{equation}
%and also in \cite{loeffenetal2017} (see also Theorem 4 for $\delta=0$ in \cite{lkabousetal2016}) such that 
%\begin{equation}\label{LaplaceTr3}
%\mathbb{E}_{x} \left[ \mathrm{e}^{-p \tau_{b}^{+}} \mathbf{1}_{\left\{ \tau_{b}^{+} < \kappa_{r} \right\}} \right] =\frac{\Lambda^{\left( p\right) }\left( x;r\right)}{\Lambda^{\left( p\right) }\left( b;r\right)}.
%\end{equation}
In the main results, the functions
%$\mathcal{E}^{\left(q,\lambda \right)}$
 $\mathcal{S}^{\left( q \right) }$ and $\Theta ^{\left( q \right) }$ are given by 
\begin{eqnarray*}
\mathcal{S}^{\left( q \right) }\left( x,r\right)  &=&
Z_{q}\left( x\right)+\Lambda^{(q)} \left( x,r\right) -\Lambda^{(q)}\left( x;r,-q\right), \\
\Theta ^{\left( q \right) }\left( x;r,\lambda\right)  &=&\mathrm{e}^{(\lambda+q) r}Z_{q}\left( x,\Phi _{\lambda +q}\right) + \Lambda^{(q)}\left( x;r\right) -\Lambda^{\left( q \right) }\left( x;r,\lambda\right),
\end{eqnarray*}
and we note $\Theta^{\left( 0 \right)}=\Theta$ when $q=0$. 
\begin{remark}
It is interesting to note that the function $\Theta ^{\left(q \right) }\left( x;r,\lambda\right)$ is expressed in terms of scale functions $\Lambda^{\left( q\right)} \left( x;r,\lambda\right) $, $\Lambda^{\left(q\right)} \left( x;r\right)$ and $Z_{q}\left( x,\Phi _{\lambda +q}\right)$  related to Parisian ruins  \eqref{kqr}, \eqref{kappar} and \eqref{T} respectively. It could be called the \textit{hybrid} scale function. 
\end{remark}
\subsection{Parisian ruin exit problems.}

Here is a collection of known fluctuation identities for the spectrally negative L\'{e}vy processes in terms of their scale functions.
%For $x \leq b$ and  obtain the well-known expression for the Laplace transform of the first passage time above $b$
%\begin{equation}\label{Laptrans}
%\mathbb{E}_{x}\left[ \mathrm{e}^{-\left( p+q\right) \tau _{b}^{+}}\mathbf{1}%
%_{\left\{ \tau _{b}^{+}<\infty \right\} }\right] =%\me^{\Phi \left(p+q\right) \left( x-b\right)}.
%\end{equation}
From \cite{baurdoux_et_al_2015} , we have the following expression of the Gerber–Shiu distribution at Parisian ruin with exponential implementation delays 
\begin{eqnarray}\label{G-S-Parisian}
\e_{x}\left[\me^{-qT_{0}^{-}},X_{T_{0}^{-}}\in \md y,T_{0}^{-}<\tau
_{a}^{+}\right] =\lambda \left( \frac{Z_{q}\left( x,\Phi _{\lambda+q}\right) }{Z_{q}\left( a,\Phi _{\lambda+q}\right) }\mathcal{W}_{a}^{\left(
q,\lambda \right) }\left( a-y\right) -\mathcal{W}_{x}^{\left(q,\lambda
\right) }\left( x-y\right) \right)\md y,
\end{eqnarray}
where $y\leq 0$ and $x\leq a$. When $a\rightarrow \infty$, we obtain 
\begin{multline}\label{G-S-Parisian2}
\e_{x}\left[\me^{-qT_{0}^{-}},X_{T_{0}^{-}}\in \md y,T_{0}^{-}<\infty\right] \\
= \left(\left(\Phi _{\lambda+q}-\Phi _{q}\right)Z_{q}\left( x,\Phi _{\lambda+q}\right) Z_{\lambda+q}\left(-y,\Phi _{q}\right)-\lambda\mathcal{W}_{x}^{\left(q,\lambda
\right) }\left( x-y\right) \right)\md y.
\end{multline}
From Landriault et al. \cite{landriaultetal2014}, we have for $x\leq a$, 
\begin{equation}\label{ParLap1}
\mathbb{E}_{x}\left[ \me^{-q\tau _{a}^{+}},\tau _{a}^{+}<T_{0}^{-}\right] =\frac{Z_{q}\left( x,\Phi _{\lambda +q}\right) }{Z_{q}\left( a,\Phi _{\lambda
+q}\right) }.
\end{equation}
%Moreover, the probability of Parisian ruin with an exponential delay is given by 
%\begin{equation}  \label{E:parisruinprobaX}
%\p_x \left(T_0^- < \infty \right) = 1 - \e \left[ X_1 \right]\dfrac{\Phi _{\lambda}}{\lambda}Z\left(x,\Phi _{\lambda}\right).
%\end{equation}
%%%%%%%%%%%%%%%%%%%%%%%%%%%%%%%%%%%%%%%%%%%%
%%%%%%%%%%%%%%%%%%%%%%%%%%%%%%%%%%%%%%%%%%%%
%%%%%%%%%%%%%%%%%%%%%%%%%%%%%%%%%%%%%%%%%%%%

\section{Main results}
We now present our main results. First, we derive the two-sided exit problem when a Parisian delay is added as an improvement over Theorem~\ref{Liformula}. We present a probabilistic analysis and simple resulting expressions for the two-sided exit problem, Laplace transform and the probability of the Parisian ruin under a hybrid observation scheme.

\begin{theorem}\label{ruinLap}
For $q\geq 0$, $b,r,\lambda>0$ and $x \leq b$, we have
\begin{eqnarray}\label{th1}
\mathbb{E}_{x}\left[ \mathrm{e}^{-q(\parisiantime _{r}^{\lambda }-r)}\ind_{\left\{\parisiantime_{r}^{\lambda }<\tau _{b}^{+}\right\} }\right]  &=&\frac{\lambda}{%
\lambda +q}\left( \mathcal{S}^{\left( q \right) }\left( x,r\right) -%
\frac{\Theta ^{\left( q \right) }\left( x;r,\lambda\right) }{\Theta
^{\left( q \right) }\left( b;r,\lambda\right) }\mathcal{S}^{\left( q\right) }\left( b,r\right) \right),
\end{eqnarray}

\begin{equation}\label{th2}
\mathbb{E}_{x}\left[ \mathrm{e}^{-q\tau _{b}^{+}}\ind_{\left\{ \tau_{b}^{+}<\parisiantime_{r}^{\lambda }\right\} }\right] =\frac{ \Theta ^{\left( q
\right) }\left( x;r,\lambda\right) }{\Theta ^{\left( q \right) }\left( b;r,\lambda\right) },
\end{equation}
and
\begin{equation}\label{th3}
\e_{x}\left[ \mathrm{e}^{-q(\parisiantime_{r}^{\lambda }-r)}\ind_{\left\{\parisiantime_{r}^{\lambda }<\infty \right\} }\right] =\frac{\lambda }{\lambda +q}\left( \mathcal{S}^{\left( q\right) }\left( x,r\right) -\Theta ^{\left( q \right) }\left( x;r,\lambda\right)\times\tilde{S}^{\left( q,\lambda\right)}\left( r\right) \right),
\end{equation}
where
\begin{equation*}
\mathcal{\tilde{S}}^{\left( q,\lambda\right) }\left( r\right) =\frac{q/\Phi _{q}-\int_{0}^{\infty }\left(
Z\left( z,\Phi _{q}\right) -\mathrm{e}^{\Phi _{q}z}\right) \frac{z}{r}%
\mathbb{P}\left( X_{r}\in \mathrm{d}z\right) }{\mathrm{e}^{\left( \lambda+q\right) r}\lambda /\left( \Phi _{\lambda +q}-\Phi _{q}\right)-\int_{0}^{\infty }\left(Z_{q+\lambda }\left(z,\Phi _{q}\right) - \mathrm{e}^{\Phi _{q}z}\right) \frac{z}{r}\mathbb{P}\left( X_{r}\in \mathrm{d}%
z\right) }.
\end{equation*}
\end{theorem}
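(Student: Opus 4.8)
The plan is to exploit the excursion/renewal structure of the hybrid scheme. Reading the definition of $\parisiantime_{r}^{\lambda}$ through the excursions of $X$ below $0$, hybrid ruin is triggered by the first down-excursion that is \emph{both} detected by the Poisson clock (detection produces $T_{0}^{-}$ together with $X_{T_{0}^{-}}<0$) \emph{and} then survives the deterministic delay $r$ measured from the detection instant; on the complementary event the excursion returns to $0$ within $r$, and since $X$ is spectrally negative it creeps continuously to $0$, so by the strong Markov property the problem restarts afresh from $0$. The three ingredients are \eqref{ParLap1} for reaching $b$ before $T_{0}^{-}$, the Gerber--Shiu law \eqref{G-S-Parisian} of $(T_{0}^{-},X_{T_{0}^{-}})$ on $\{T_{0}^{-}<\tau_{b}^{+}\}$, and the elementary recovery and survival quantities $\e_{-u}[\me^{-q\tau_{0}^{+}};\tau_{0}^{+}\le r]=\e[\me^{-q\tau_{u}^{+}};\tau_{u}^{+}\le r]$ and $\p_{-u}(\tau_{0}^{+}>r)=\p(\tau_{u}^{+}>r)$, whose Laplace transforms in $r$ follow from \eqref{Laptrans} and Kendall's identity \eqref{eq:Kendall}.

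First I would prove \eqref{th2}. Setting $f(x)=\e_{x}[\me^{-q\tau_{b}^{+}}\ind_{\{\tau_{b}^{+}<\parisiantime_{r}^{\lambda}\}}]$ and conditioning on $T_{0}^{-}$ gives the renewal equation
\begin{equation*}
f(x)=\frac{Z_{q}(x,\Phi_{\lambda+q})}{Z_{q}(b,\Phi_{\lambda+q})}+f(0)\int_{(-\infty,0)}\e_{x}\!\left[\me^{-qT_{0}^{-}},X_{T_{0}^{-}}\in\md y,\,T_{0}^{-}<\tau_{b}^{+}\right]\e_{y}\!\left[\me^{-q\tau_{0}^{+}};\tau_{0}^{+}\le r\right],
\end{equation*}
the leading term covering $\tau_{b}^{+}<T_{0}^{-}$ and the integral covering detection followed by recovery within $r$ and a restart at $0$. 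Plugging in \eqref{G-S-Parisian}, substituting $y=-u$, and using Kendall's identity \eqref{eq:Kendall} turns $\int_{0}^{\infty}\mathcal{W}_{x}^{(q,\lambda)}(x+u)\,\e[\me^{-q\tau_{u}^{+}};\tau_{u}^{+}\le r]\,\md u$ into $\int_{0}^{r}\me^{-qt}\Lambda^{(q)}(x;t,\lambda)\,\md t$ through \eqref{Exp2}. Solving for $f(0)$ and simplifying, the answer $\Theta^{(q)}(x;r,\lambda)/\Theta^{(q)}(b;r,\lambda)$ emerges once one establishes
\begin{equation*}
\lambda\int_{0}^{r}\me^{-qt}\Lambda^{(q)}(x;t,\lambda)\,\md t=Z_{q}(x,\Phi_{\lambda+q})\Big(1+\lambda\int_{0}^{r}\me^{-qt}\Lambda^{(q)}(0;t,\lambda)\,\md t\Big)-\me^{-qr}\Theta^{(q)}(x;r,\lambda).
\end{equation*}

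For \eqref{th1} I would set $V(x)=\e_{x}[\me^{-q(\parisiantime_{r}^{\lambda}-r)}\ind_{\{\parisiantime_{r}^{\lambda}<\tau_{b}^{+}\}}]$ and condition identically. The detected-and-surviving excursion now contributes the direct term $\lambda\int_{0}^{\infty}\mathcal{G}_{x}(u)\,\p(\tau_{u}^{+}>r)\,\md u$, where $\mathcal{G}_{x}$ is the Gerber--Shiu density of \eqref{G-S-Parisian}, because there $\parisiantime_{r}^{\lambda}-r=T_{0}^{-}$; the recovered excursion reproduces exactly the coefficient of $f(0)$ from \eqref{th2}, now multiplying $V(0)$. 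Eliminating $V(0)$, the functions $Z_{q}(\cdot,\Phi_{\lambda+q})$ cancel in pairs and the output collapses to the stated combination of $\mathcal{S}^{(q)}$ and $\Theta^{(q)}$ once one proves the companion identity
\begin{equation*}
\lambda\int_{0}^{\infty}\Big(Z_{q}(x,\Phi_{\lambda+q})\,W_{q+\lambda}(u)-\mathcal{W}_{x}^{(q,\lambda)}(x+u)\Big)\p(\tau_{u}^{+}>r)\,\md u=\frac{\lambda}{\lambda+q}\Big(\mathcal{S}^{(q)}(x,r)-\Theta^{(q)}(x;r,\lambda)\Big).
\end{equation*}
Both displayed identities I would verify by Laplace transforming in $r$, using $\int_{0}^{\infty}\me^{-pr}\p(\tau_{u}^{+}>r)\,\md r=p^{-1}(1-\me^{-\Phi_{p}u})$ together with \eqref{LapscriptW} and \eqref{def_scale} to reduce each side to the same rational combination of $Z_{q}(x,\Phi_{p})$, $Z_{q}(x,\Phi_{\lambda+q})$ and $Z_{q}(x)$.

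The main obstacle is precisely these two Laplace-transform identities. The delicate point is that the individual scale-function integrals diverge, since $W_{q+\lambda}(u)$ grows like $\me^{\Phi_{q+\lambda}u}$; only the exact combinations appearing in the Gerber--Shiu density are integrable, their leading exponentials cancelling because $\mathcal{W}_{x}^{(q,\lambda)}(x+u)\sim Z_{q}(x,\Phi_{\lambda+q})\,W_{q+\lambda}(u)$. I would therefore keep these combinations intact and recover the total-mass contribution, the $\theta\downarrow0$ boundary value of \eqref{LapscriptW}, by analytic continuation in the Laplace variable from $\{\theta>\Phi_{q+\lambda}\}$, where \eqref{LapscriptW} holds, down to $\theta=0$; this is where integrability of the combined kernel is genuinely used. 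Finally \eqref{th3} follows from \eqref{th1} by letting $b\to\infty$: by \eqref{CM1} and \eqref{limitZW} the ratios $\mathcal{S}^{(q)}(b,r)/W_{q}(b)$ and $\Theta^{(q)}(b;r,\lambda)/W_{q}(b)$ converge to the numerator and denominator of $\tilde{S}^{(q,\lambda)}(r)$, so that $\Theta^{(q)}(x;r,\lambda)\,\mathcal{S}^{(q)}(b,r)/\Theta^{(q)}(b;r,\lambda)\to\Theta^{(q)}(x;r,\lambda)\,\tilde{S}^{(q,\lambda)}(r)$, yielding the claimed formula.
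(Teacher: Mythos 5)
Your proposal is correct and rests on the same probabilistic skeleton as the paper: the identical renewal decomposition at the detection time $T_0^-$ and the recovery time $\tau_0^+$ (recover to $0$ within $r$ and restart, versus fail and trigger ruin at $T_0^-+r$), the same inputs \eqref{ParLap1}, \eqref{G-S-Parisian}--\eqref{G-S-Parisian2} and Kendall's identity \eqref{eq:Kendall}, the same solve-for-the-value-at-$x=0$ step, and the same passage $b\to\infty$ via \eqref{CM1} and \eqref{limitZW} for \eqref{th3}. Where you genuinely differ is the technical middle layer. The paper first establishes Lemma~\ref{L:L5}, computing $\e_x\bigl[\me^{-qT_0^-}W_p(X_{T_0^-}+z)\ind_{\{T_0^-<\tau_b^+\}}\bigr]$ pointwise in $z$ by convolving the Gerber--Shiu density with $W_p$ through the deterministic identity \eqref{convolution2}, then integrates against $\tfrac{z}{r}\p(X_r\in\md z)$ to get Lemma~\ref{mainlem}, feeding the recovery probability in through \eqref{L22}. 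You instead integrate the Gerber--Shiu density directly against $\e[\me^{-q\tau_u^+};\tau_u^+\le r]$ and $\p(\tau_u^+>r)$ and verify the two resulting closed forms by Laplace transforming in $r$, using $\int_0^\infty\me^{-(\theta+q)r}\Lambda^{(q)}(x;r,s)\,\md r=Z_q(x,\Phi_{\theta+q})/(\theta-s)$; I checked both of your displayed identities this way and they are correct, and reinserting them into your renewal equations does reproduce \eqref{th1} and \eqref{th2} (your intermediate quantities agree, after a Laplace transform in $r$, with \eqref{Lem1}--\eqref{Lem2}). Your flagged subtlety is the right one: the individual integrals against $\p(\tau_u^+>r)$ diverge, and only the combination $Z_q(x,\Phi_{\lambda+q})W_{q+\lambda}(u)-\mathcal{W}_x^{(q,\lambda)}(x+u)$ is integrable, which is handled by continuing \eqref{LapscriptW} down to $\theta=0$; this is implicitly what makes $\mathcal{S}^{(q)}$ well defined in the paper as well. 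The trade-off is that the paper's route produces the reusable scale-function identities \eqref{Lem1}--\eqref{Lem2} and avoids transform inversion, while yours is lighter on convolution algebra but rests on uniqueness of the Laplace transform in $r$ (so one should also note continuity in $r$ of both sides to upgrade almost-everywhere equality to everywhere), a technique the paper itself employs in Section~\ref{sect:discussion}.
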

Setting $q=0$ in \eqref{th3}, we obtain the following new expression for the probability of Parisian ruin.
\begin{corol}\label{ruinP}
For $x \in \reals$ and $\lambda,r>0$, we have
\begin{equation}\label{ruinP1}
\p_{x}\left( \parisiantime^{\lambda}_{r}<\infty \right) =1-\Theta\left( x,r;\lambda\right)\times\tilde{S}^{\left(0,\lambda\right)}\left( r\right).
\end{equation}
\end{corol}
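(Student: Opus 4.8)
The plan is to obtain Corollary~\ref{ruinP} by simply setting $q=0$ in the Laplace-transform identity \eqref{th3} of Theorem~\ref{ruinLap}. At $q=0$ the integrand $\me^{-q(\parisiantime_r^\lambda-r)}$ equals $1$ on the event $\{\parisiantime_r^\lambda<\infty\}$ (note that $\parisiantime_r^\lambda\ge r$, so the exponent is nonnegative), so the left-hand side of \eqref{th3} becomes exactly $\p_x(\parisiantime_r^\lambda<\infty)$; meanwhile the prefactor $\lambda/(\lambda+q)$ reduces to $1$. It then remains to evaluate the three building blocks $\mathcal{S}^{(q)}$, $\Theta^{(q)}$ and $\tilde{S}^{(q,\lambda)}$ at $q=0$.

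The crucial observation is that $\mathcal{S}^{(0)}(x,r)=1$. Indeed, starting from $\mathcal{S}^{(q)}(x,r)=Z_q(x)+\Lambda^{(q)}(x,r)-\Lambda^{(q)}(x;r,-q)$, one has $Z_0(x)=1$ by \eqref{eq:zqscale}, while at $q=0$ the second-parameter argument $-q$ vanishes and, since $\Lambda^{(p)}(x;r,0)=\Lambda^{(p)}(x,r)$, the two delayed-scale-function terms cancel. This is precisely what converts \eqref{th3} into the form $1-(\,\cdot\,)$ appearing in \eqref{ruinP1}. For the remaining factors I would invoke the conventions already fixed in the paper: by $\Theta^{(0)}=\Theta$ the term $\Theta^{(0)}(x;r,\lambda)$ is the function $\Theta(x,r;\lambda)$ of \eqref{ruinP1}, and $\tilde{S}^{(0,\lambda)}(r)$ is the displayed $q=0$ specialization of $\tilde{S}^{(q,\lambda)}(r)$. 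Collecting these substitutions in \eqref{th3} yields \eqref{ruinP1}.

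The step I expect to require the most care is the term $q/\Phi_q$ in the numerator of $\tilde{S}^{(q,\lambda)}$. When $\psi'(0+)<0$ one has $\Phi_0>0$ and this term vanishes outright at $q=0$; when $\psi'(0+)\ge 0$ one has $\Phi_0=0$ and $q/\Phi_q$ is an indeterminate $0/0$ that must be read as the limit $\lim_{q\to0+}q/\Phi_q=\psi'(0+)$, obtained from $\psi(\Phi_q)=q$ together with a first-order expansion of $\psi$ at the origin. To make the passage to $q=0$ fully rigorous one should also record that all the remaining ingredients, namely $W_q$, $Z_q(\cdot,\theta)$ and the integrals against $\tfrac{z}{r}\,\p(X_r\in\md z)$, are continuous in $q$ near $0$, so that the limit may be taken term by term; this is standard and poses no real difficulty.
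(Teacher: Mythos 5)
Your proposal is correct and is essentially the paper's own argument: the corollary is obtained by setting $q=0$ in \eqref{th3}, and your verification that $\mathcal{S}^{(0)}(x,r)=1$ (via $Z_0\equiv 1$ and the cancellation $\Lambda^{(0)}(x;r,0)=\Lambda^{(0)}(x,r)$), together with the convention $\Theta^{(0)}=\Theta$ and the prefactor $\lambda/(\lambda+q)$ reducing to $1$, supplies exactly the details the paper leaves implicit. Your caveat about reading $q/\Phi_q$ at $q=0$ as $\psi'(0+)$ when $\Phi_0=0$ is consistent with the remark the paper makes immediately after the corollary.
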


\begin{remark}
When $ \psi^{\prime }(0+)>0$, we have 
\begin{equation*}
\mathcal{\tilde{S}}^{\left( 0,\lambda \right) }\left( r\right) =\frac{%
\mathbb{E}\left[ X_{1}\right] }{\mathrm{e}^{\lambda r}\lambda /\Phi
_{\lambda }-\int_{0}^{\infty }\left( Z_{\lambda }\left( z\right) -1\right) 
\frac{z}{r}\mathbb{P}\left( X_{r}\in \mathrm{d}z\right) }.
\end{equation*}
Then,
\begin{equation}\label{ruinP1v2}
\p_{x}\left( \parisiantime^{\lambda}_{r}<\infty \right) =1-\mathbb{E}\left[ X_{1}\right]\frac{\Theta\left( x;r,\lambda\right)}{\mathrm{e}^{\lambda r}\lambda /\Phi
_{\lambda }-\int_{0}^{\infty }\left( Z_{\lambda }\left( z\right) -1\right) 
\frac{z}{r}\mathbb{P}\left( X_{r}\in \mathrm{d}z\right) }.
\end{equation}
Our expression of the probability of Parisian ruin has a similar structure as Equation \eqref{SNLPPr} and it is different than the one in Theorem~\ref{Liformula}, which is because in the proof in \cite{binetal2016} they use a different approach.
\end{remark}
\subsection{Discussion on the results}\label{sect:discussion}
The fluctuation identities in Theorem~\ref{ruinLap}  have a similar structure as the Parisian fluctuation identities. Indeed, in Equation~\eqref{th1}, the functions $S ^{\left( q\right) }(\cdot,r)$ and $\Theta^{\left( q\right) }(\cdot;r,\lambda)$ play a similar role as the one played by the scale functions $Z_q(\cdot )$ and $Z_q(\cdot,\Phi_{\lambda+q} )$ respectively in the following Parisian fluctuation identity: for $x\leq b$, we have
\begin{equation}\label{iden1}
\mathbb{E}_{x}\left[ \me^{-qT_{0}^{-}}\ind_{\left\{T_{0}^{-}<\tau _{b}^{+}\right\}}\right] =\frac{%
\lambda }{\lambda +q }\left( Z_{q }\left( x\right) -\frac{
Z_{q}\left( x,\Phi _{\lambda +q}\right) }{Z_{q}\left( b,\Phi
_{\lambda+q}\right) }Z_{q }\left(b\right) \right).
\end{equation}

First, we will show that the function  $S ^{\left( q\right) }(\cdot,r)$ converges, as $r\rightarrow 0$, to the scale function $Z_q(\cdot)$.
Taking Laplace transforms in $r$, together with Kendall's identity, Tonelli's theorem, \eqref{Laptrans} and \eqref{LapscriptW}, we have 
\begin{equation*}
\int_{0}^{\infty }\mathrm{e}^{-\theta r}\left( \mathrm{e}^{-qr}\Lambda
^{\left( q\right) }\left( x;r\right) \right) \mathrm{d}r=\int_{0}^{\infty }%
\mathrm{e}^{-\Phi_{\theta +q }z}W_{q}\left( x+z\right) \mathrm{d}z=\dfrac{Z_{q}\left( x,\Phi _{\theta +q}\right) }{\theta},
\end{equation*}%
and 
\begin{eqnarray*}
\int_{0}^{\infty }%
\mathrm{e}^{-\theta r}\left( \mathrm{e}^{-qr}\Lambda ^{\left( q\right)
}\left( x;r,-q\right) \right) \mathrm{d}r=\frac{Z_{q}\left( x,\Phi_{\theta +q}\right)  }{\theta+q }.
\end{eqnarray*}
Then, using the Initial Value Theorem, we obtain 
\begin{multline*}
\lim_{r\rightarrow 0}\mathrm{e}^{-qr}\left( \Lambda ^{\left( q\right)
}\left( x;r\right) -\Lambda ^{\left( q\right) }\left( x;r,-q\right) \right) 
\\
=\lim_{\theta \rightarrow \infty }\theta \int_{0}^{\infty }\mathrm{e}%
^{-(\theta +q)r}\left( \Lambda ^{\left( q\right) }\left( x;r\right) -\Lambda
^{\left( q\right) }\left( x;r,-q\right) \right) \mathrm{d}r \\
=\lim_{\theta \rightarrow \infty }\frac{q}{\theta +q}Z_{q}\left( x,\Phi_{\theta +q}\right) =0.
\end{multline*}
where the last equality follows form the fact that 
\begin{equation*}
\lim_{\theta \rightarrow \infty }\frac{q}{\theta +q}Z_{q}\left( x,\Phi_{\theta +q}\right) =\lim_{\theta \rightarrow \infty }\left( \frac{q\theta }{%
\left( \theta +q\right) \Phi_{\theta +q}}\right) \left( \frac{\Phi_{\theta +q}}{\theta }Z_{q}\left( x,\Phi_{\theta +q}\right) \right) ,
\end{equation*}
and
\begin{equation*}
\lim_{\theta \rightarrow \infty } \frac{\Phi_{\theta +q}}{%
\theta }Z_{q}\left( x,\Phi_{\theta +q}\right)=\lim_{\theta \rightarrow \infty } \Phi_{\theta +q}\int_{0}^{\infty}\me^{-\Phi_{\theta +q} y}W_{q}(x+y)\mathrm{d}y =W_{q}\left( x\right),
\end{equation*}
where the last equality follows from the Initial Value Theorem.
Then,
\begin{eqnarray*}
\lim_{r\rightarrow 0}\mathrm{e}^{-qr}\mathcal{S}^{\left( q,\lambda \right)
}\left( x,r\right) &=&Z_{q}\left( x\right) .
\end{eqnarray*}
By the same steps, we can also show that
\begin{equation*}
\lim_{r\rightarrow 0}\Theta ^{\left(q \right) }\left( x;r,\lambda\right)
=Z_{q}\left( x,\Phi _{\lambda +q}\right).
\end{equation*}
Thus, 
%the identity in \eqref{th2} converges, as $r\rightarrow 0$, to the solution of the delayed version of the two-sided exit problem when the implementation delay is exponentially distributed and also
\begin{eqnarray*}
\lim_{r\rightarrow 0}\mathbb{E}_{x}\left[ \mathrm{e}^{-q\parisiantime _{r}^{\lambda }}\ind_{\left\{\parisiantime_{r}^{\lambda }<\tau _{b}^{+}\right\} }\right] =\mathbb{E}_{x}\left[ \me^{-qT_{0}^{-}}\ind_{\left\{T_{0}^{-}<\tau _{b}^{+}\right\}}\right].
\end{eqnarray*}
\section{Proofs }\label{S:proofs}
The proofs of our main results are based on technical but important lemmas, as well as more standard probabilistic decompositions. We use the expression of the Gerber–Shiu distribution at Parisian ruin with exponential implementation delays in \cite{baurdoux_et_al_2015} and some results in \cite{lkabousetal2016} to obtain our key lemma (Lemma~\ref{L:L5} below). First, our main interest is deriving a closed formula for the following identity
$$\mathbb{E}_{x}\left[ \mathrm{e}^{-qT_{0}^{-}}W_{p}\left(
X_{T_{0}^{-}}+z\right) \mathbf{1}_{\left\{ T_{0}^{-}<\tau _{b}^{+}\right\} }%
\right],$$
where $p,q\geq 0$, $x\leq b$ and $W_p$ is the scale function of $X$.
\begin{lemma}\label{L:L5}
For $p,q,b\geq 0$, $\lambda, r,z>0$ and $x\leq b$, we have
\begin{multline}
\mathbb{E}_{x}\left[ \mathrm{e}^{-qT_{0}^{-}}W_{p}\left(
X_{T_{0}^{-}}+z\right) \mathbf{1}_{\left\{ T_{0}^{-}<\tau _{b}^{+}\right\} }%
\right]\\ =\dfrac{\lambda }{p-\left( q+\lambda \right)}\frac{Z_{q}\left(
x,\Phi _{\lambda +q}\right) }{Z_{q}\left( b,\Phi _{\lambda +q}\right) }%
\left( \mathcal{W}_{b}^{\left( q,p-q\right) }\left( b+z\right) -\mathcal{W}%
_{b}^{\left( q,\lambda \right) }\left( b+z\right) \right)   \label{LL1} \\
-\dfrac{\lambda }{p-\left( q+\lambda \right) }\left( \mathcal{W}%
_{x}^{\left( q,p-q\right) }\left( x+z\right) -\mathcal{W}_{x}^{\left(
q,\lambda \right) }\left( x+z\right) \right) ,
\end{multline}
and
\begin{multline}
\mathbb{E}_{x}\left[ \mathrm{e}^{-qT_{0}^{-}}W_{p}\left(
X_{T_{0}^{-}}+z\right) \mathbf{1}_{\left\{ T_{0}^{-}<\infty \right\} }\right]\\
=\dfrac{\left( \Phi _{\lambda +q}-\Phi _{q}\right) }{p-\left( q+\lambda \right) }Z_{p}\left( x,\Phi _{\lambda +p}\right) \left(Z_{p}\left( x+z,\Phi _{q}\right)-Z_{\lambda+q}\left(x+z,\Phi _{q}\right)\right)
\label{LL2} \\
-\dfrac{\lambda }{p-\left( q+\lambda \right)}\left( \mathcal{W}%
_{x}^{\left( q,p-q\right) }\left( x+z\right) -\mathcal{W}_{x}^{\left(
q,\lambda \right) }\left( x+z\right) \right).
\end{multline}
\end{lemma}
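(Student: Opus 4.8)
The plan is to derive both identities by conditioning on the overshoot $X_{T_0^-}\le 0$ and integrating the known Gerber--Shiu densities \eqref{G-S-Parisian} and \eqref{G-S-Parisian2} against $W_p(\,\cdot\,+z)$. For the two-sided identity \eqref{LL1} I would write
\[
\mathbb{E}_{x}\big[\me^{-qT_{0}^{-}}W_{p}(X_{T_{0}^{-}}+z)\ind_{\{T_{0}^{-}<\tau_{b}^{+}\}}\big]=\int_{-\infty}^{0}W_{p}(y+z)\,\e_{x}\big[\me^{-qT_{0}^{-}},X_{T_{0}^{-}}\in\md y,T_{0}^{-}<\tau_{b}^{+}\big],
\]
and insert \eqref{G-S-Parisian} with $a=b$; for the unrestricted event I would do the same with \eqref{G-S-Parisian2}. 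In both cases the problem collapses to evaluating scale-function convolutions: one against $\mathcal W_a^{(q,\lambda)}$ and, for the second identity, one against $Z_{\lambda+q}(\,\cdot\,,\Phi_q)$.

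The heart of the argument is a single convolution reduction. For $x\le a$, substituting $v=a-y$ and using that $W_p$ vanishes on $(-\infty,0)$ gives
\[
\int_{-\infty}^{0}W_{p}(y+z)\,\mathcal W_{a}^{(q,\lambda)}(a-y)\,\md y=\int_{a}^{a+z}W_{p}\big((a+z)-v\big)\,\mathcal W_{a}^{(q,\lambda)}(v)\,\md v,
\]
and the convolution identity \eqref{convolution2}, applied with $s=p$ and $x=a+z>a$, turns the right-hand side into $\tfrac{1}{p-(q+\lambda)}\big(\mathcal W_{a}^{(q,p-q)}(a+z)-\mathcal W_{a}^{(q,\lambda)}(a+z)\big)$. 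Using this with $a=b$ on the first term of \eqref{G-S-Parisian} and with $a=x$ on the second, and carrying the ratio $Z_q(x,\Phi_{\lambda+q})/Z_q(b,\Phi_{\lambda+q})$ through, assembles exactly into \eqref{LL1}.

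For \eqref{LL2} the $\mathcal W_{x}^{(q,\lambda)}$ contribution is handled verbatim by the same reduction. The genuinely new piece is the $Z$-term of \eqref{G-S-Parisian2}: after the substitution $u=-y$ and again invoking the support of $W_p$,
\[
\int_{-\infty}^{0}W_{p}(y+z)\,Z_{\lambda+q}(-y,\Phi_{q})\,\md y=\int_{0}^{z}W_{p}(z-u)\,Z_{\lambda+q}(u,\Phi_{q})\,\md u,
\]
to which I would apply identity \eqref{EQ2} (integration limit $z$, $\theta=\Phi_q$, indices $p$ and $\lambda+q$) to obtain $\tfrac{1}{p-(\lambda+q)}\big(Z_{p}(z,\Phi_{q})-Z_{\lambda+q}(z,\Phi_{q})\big)$. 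Multiplying by the prefactor $(\Phi_{\lambda+q}-\Phi_q)Z_q(x,\Phi_{\lambda+q})$ inherited from \eqref{G-S-Parisian2} and recombining with the $\mathcal W$-term produces \eqref{LL2}. Alternatively, \eqref{LL2} should be recoverable from \eqref{LL1} by letting $b\to\infty$ and invoking the asymptotics \eqref{CM1}--\eqref{limitZW} on the ratio $\mathcal W_b^{(q,\cdot)}(b+z)/Z_q(b,\Phi_{\lambda+q})$, but the direct route via \eqref{G-S-Parisian2} is cleaner.

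I expect the difficulties to be bookkeeping rather than conceptual. The interchange of the (finite, positive) Gerber--Shiu measure with the $y$-integral is justified by Tonelli since every integrand is nonnegative and the total mass is finite for $x\le b$. One must also check that the index combinations fed into \eqref{convolution2} and \eqref{EQ2} stay within the ranges for which those identities were stated; this holds for the parameters occurring here. The one point deserving explicit comment is the apparent singularity at $p=q+\lambda$, where the factor $1/(p-(q+\lambda))$ diverges: there the bracketed scale-function differences vanish to first order in $p$, so the stated formulas extend to that boundary case by continuity in $p$.
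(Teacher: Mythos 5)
Your proposal is correct and follows essentially the same route as the paper: integrate $W_p(\cdot+z)$ against the Gerber--Shiu densities \eqref{G-S-Parisian} and \eqref{G-S-Parisian2} and collapse the resulting convolutions via \eqref{convolution2} and \eqref{EQ2}. Note only that your (correct) computation of the $Z$-term gives $Z_{q}\left(x,\Phi_{\lambda+q}\right)\left(Z_{p}\left(z,\Phi_{q}\right)-Z_{\lambda+q}\left(z,\Phi_{q}\right)\right)$, consistent with the $b\to\infty$ limit of \eqref{LL1}, whereas the displayed \eqref{LL2} reads $Z_{p}\left(x,\Phi_{\lambda+p}\right)\left(Z_{p}\left(x+z,\Phi_{q}\right)-Z_{\lambda+q}\left(x+z,\Phi_{q}\right)\right)$ --- an apparent typo in the statement rather than a gap in your argument.
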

%
%\begin{eqnarray*}\label{Lemma}
%\mathbb{E}_{u}\left[ \mathbb{P}_{X_{\hat{T}_{0}^{\lambda ,-}}}(\tau_{a}^{+}<r)\mathbf{1}_{\left\{ \hat{T}_{0}^{\lambda ,-}<\infty \right\} }\right]=\int_{a}^{\infty }\mathcal{R}^{\lambda }\left( a,u,z\right) \frac{z}{r}\mathbb{P}%
%\left( X_{r}\in \mathrm{d}z\right) 
%\end{eqnarray*}
\begin{proof}
Using \eqref{G-S-Parisian}, we have
\begin{multline}
\mathbb{E}_{x}\left[ \mathrm{e}^{-qT_{0}^{-}}W_{p}\left(
X_{T_{0}^{-}}+z\right) \mathbf{1}_{\left\{ T_{0}^{-}<\tau _{b}^{+}\right\} }%
\right]  \\
=\int_{-\infty }^{0}W_{p}\left( y+z\right) \mathbb{E}_{x}\left[ \mathrm{e}%
^{-qT_{0}^{-}},X_{T_{0}^{-}}\in \mathrm{d}y,T_{0}^{-}<\tau _{b}^{+}\ \right]  \\
=\lambda \frac{Z_{q}\left( x,\Phi _{\lambda +q}\right) }{Z_{q}\left( b,\Phi
_{\lambda +q}\right) }\int_{-\infty }^{0}W_{p}\left( y+z\right) \mathcal{W}%
_{b}^{\left( q,\lambda \right) }\left( b-y\right) \mathrm{d}y \\
-\lambda \int_{-\infty }^{0}W_{p}\left( y+z\right) \mathcal{W}_{x}^{\left(
q,\lambda \right) }\left( x-y\right) \mathrm{d}y \\
=\frac{Z_{q}\left( x,\Phi _{\lambda +q}\right) }{Z_{q}\left( b,\Phi
_{\lambda +q}\right) }\lambda \int_{0}^{z}W_{p}\left( z-y\right) \mathcal{W}%
_{b}^{\left( q,\lambda \right) }\left( b+y\right) \mathrm{d}y \\
-\lambda \int_{0}^{z}W_{p}\left( z-y\right) \mathcal{W}_{x}^{\left(
q,\lambda \right) }\left( x+y\right) \mathrm{d}y \\
\text{ \ \ \ \ \ \ \ \ \ \ \ \ \ \ \ \ \ \ \ \ \ \ \ }=\dfrac{\lambda }{
p-\left( q+\lambda \right)}\frac{Z_{q}\left( x,\Phi _{\lambda +q}\right) 
}{Z_{q}\left( b,\Phi _{\lambda +q}\right) }\left( \mathcal{W}_{b}^{\left(
q,p-q\right) }\left( b+z\right) -\mathcal{W}_{b}^{\left( q,\lambda \right)
}\left( b+z\right) \right)  \\
-\dfrac{\lambda }{p-\left( q+\lambda \right)}\left( \mathcal{W}%
_{x}^{\left( q,p-q\right) }\left( x+z\right) -\mathcal{W}_{x}^{\left(
q,\lambda \right) }\left( x+z\right) \right) ,
\end{multline}
where in the last equality we used identity \eqref{convolution2}. The second identity follows using \eqref{G-S-Parisian2} and \eqref{EQ2}.
\end{proof}
\begin{remark}
The last lemma generalizes the classical identity that can be found in numerous references (see \cite{landriaultetal2018} for more details on the convergence when $\lambda\rightarrow \infty$) , see e.g.  \cite{loeffenetal2014}, that is : for any $p,q\geq 0$ and $a\leq x \leq b$, we have 
\begin{equation} \label{classiden}
\e_{x}\left[ \me^{-q\tau _{a}^{-}}W_{p}\left( X_{\tau_{a}^{-}}+z\right) \ind_{\left\{ \tau _{a}^{-}<\tau _{b}^{+}\right\} }\right] =
\mathcal{W}_{a+z}^{\left(p,q-p\right) }\left(x+z\right) -\frac{W_q\left( x-a\right) }{W_q\left( b-a\right) }\mathcal{W}_{a+z}^{\left(p,q-p\right) }\left( b+z\right).
\end{equation}

\end{remark}
Before presenting our main lemma below, we have the following identities taken from \cite{lkabousetal2016}. 
\begin{equation}\label{L22}
\e_x\left[\me^{-q\tau^+_0}\ind_{\left\lbrace \tau^+_0<r \right\rbrace }\right]=\me^{-qr}\Lambda^{(q)}(x,r), \textit{  }x\leq0,
\end{equation}
and
\begin{equation} \label{L11}
\Lambda ^{\left(q\right)}\left(0,r\right)= \me^{qr}.
\end{equation}
The following identities are new and crucial for the proof of our main results.
\begin{lemma}\label{mainlem}
For $p,q,b\geq 0$, $\lambda, r>0$ and $x \leq b$, we have 
\begin{multline} \label{Lem1} 
\mathbb{E}_{x}\left[\mathrm{e}^{-qT_{0}^{-}}\Lambda(X_{T_{0}^{-}},r) \ind_{\left\{ T_{0}^{-}<\tau _{b}^{+}\right\}}\right]\\  =\frac{\lambda }{\lambda +q}\left(\Lambda^{(q)}\left(x;r,-q\right) -\Lambda^{(q)}\left( x;r,\lambda\right) -\frac{Z_{q}\left( x,\Phi _{\lambda
+q}\right) }{Z_{q}\left( b,\Phi _{\lambda +q}\right) } \left( \Lambda^{(q)}\left( b;r,-q\right) -\Lambda^{(q)}\left( b;r,\lambda\right)\right)\right),
\end{multline}%
and
\begin{multline}\label{Lem2} 
\e_{x}\left[ \mathrm{e}^{-qT_{0}^{-}}\Lambda^{(q)}(X_{T_{0}^{-}},r)
\ind_{\left\{ T_{0}^{-}<\tau _{b}^{+}\right\} }\right] \\=\Lambda^{(q)}\left( x;r\right) -\Lambda^{\left( q \right) }\left( x;r,\lambda\right) -\frac{Z_{q}\left( x,\Phi _{\lambda
+q}\right) }{Z_{q}\left( b,\Phi _{\lambda +q}\right) } \left(\Lambda^{(q)}\left( b;r\right) -\Lambda^{\left( q \right) }\left( b;r,\lambda\right)\right).
\end{multline}
%and when $b\rightarrow \infty$, we obtain
%\begin{multline}\label{lem3}
%\mathbb{E}_{x}\left[ \mathrm{e}^{-qT_{0}^{-}}\mathbb{P}_{X_{T_{0}^{-}}}%
%\left( \tau _{0}^{+}\leq r\right)\ind_{\left\{ T_{0}^{-}<\infty \right\} }\right] \\=\dfrac{\lambda }{\left( q+\lambda \right) }\left(\mathcal{R}^{\left(q,\lambda \right) }_{x}\left( x,r\right)-\left( \Phi _{\lambda +q}-\Phi _{q}\right)\dfrac{Z_{q}\left( x,\Phi _{\lambda +q}\right))}{\lambda}\tilde{\mathcal{R}}^{\left(0, q,\lambda \right) }\left( x,r\right) \right),
%\end{multline}
%and
%\begin{multline}\label{lem4}
%\mathbb{E}\left[ \mathrm{e}^{-qT_{0}^{-}}\mathbb{E}_{X_{T_{0}^{-}}}\left[ 
%\mathrm{e}^{-q\tau _{0}^{+}}\ind_{\left\{ \tau _{0}^{+}<r\right\} }\right]
%\ind_{\left\{ T_{0}^{-}<\infty \right\} }\right]\\=\mathrm{e}^{-qr}\left(\Theta^{\left( q,\lambda \right)}\left( x,r\right)
%-\left( \Phi _{\lambda +q}-\Phi _{q}\right)\dfrac{Z_{q}\left( x,\Phi _{\lambda +q}\right))}{\lambda}\tilde{\mathcal{E}}^{\left(q, q,\lambda \right) }\left( x,r\right)\right).
%\end{multline}
%\end{eqnarray}
\end{lemma}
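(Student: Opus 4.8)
The plan is to prove both identities at once by establishing a single master identity, namely a closed form for
\[
\mathbb{E}_{x}\left[ \mathrm{e}^{-qT_{0}^{-}}\Lambda^{(p)}(X_{T_{0}^{-}},r)\ind_{\left\{ T_{0}^{-}<\tau _{b}^{+}\right\}}\right]
\]
for an arbitrary parameter $p\geq 0$, and then reading off \eqref{Lem1} and \eqref{Lem2} by the two specializations $p=0$ and $p=q$. The starting point is the integral representation \eqref{DS2}, which writes $\Lambda^{(p)}(X_{T_{0}^{-}},r)=\int_{0}^{\infty}W_{p}(X_{T_{0}^{-}}+z)\,\tfrac{z}{r}\,\p(X_{r}\in\md z)$. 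Since $W_p\geq 0$ and $\tfrac{z}{r}\p(X_r\in\md z)$ is a nonnegative measure, Tonelli's theorem lets me interchange the expectation and the $z$-integral without any integrability worry, so that the left-hand side equals
\[
\int_{0}^{\infty}\mathbb{E}_{x}\left[ \mathrm{e}^{-qT_{0}^{-}}W_{p}(X_{T_{0}^{-}}+z)\ind_{\left\{ T_{0}^{-}<\tau _{b}^{+}\right\}}\right]\frac{z}{r}\,\p(X_{r}\in\md z).
\]

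Next I would insert Lemma~\ref{L:L5}, identity \eqref{LL1}, for the inner expectation. This produces two kinds of $z$-integrals against $\tfrac{z}{r}\p(X_r\in\md z)$: one of $\mathcal{W}_x^{(q,p-q)}(x+z)$ and $\mathcal{W}_b^{(q,p-q)}(b+z)$, and one of $\mathcal{W}_x^{(q,\lambda)}(x+z)$ and $\mathcal{W}_b^{(q,\lambda)}(b+z)$. The crucial recognition step is that each of these is exactly a delayed scale function in the sense of representation \eqref{Exp2}: namely $\int_{0}^{\infty}\mathcal{W}_x^{(q,s)}(x+z)\tfrac{z}{r}\p(X_r\in\md z)=\Lambda^{(q)}(x;r,s)$, and likewise at $b$. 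Carrying the constant $\tfrac{\lambda}{p-(q+\lambda)}$ and the ratio $Z_q(x,\Phi_{\lambda+q})/Z_q(b,\Phi_{\lambda+q})$ through the integral then yields the master identity
\[
\mathbb{E}_{x}\left[ \mathrm{e}^{-qT_{0}^{-}}\Lambda^{(p)}(X_{T_{0}^{-}},r)\ind_{\left\{ T_{0}^{-}<\tau _{b}^{+}\right\}}\right]
=\frac{\lambda}{p-(q+\lambda)}\left[\Lambda^{(q)}(x;r,p-q)-\Lambda^{(q)}(x;r,\lambda)-\frac{Z_q(x,\Phi_{\lambda+q})}{Z_q(b,\Phi_{\lambda+q})}\bigl(\Lambda^{(q)}(b;r,p-q)-\Lambda^{(q)}(b;r,\lambda)\bigr)\right].
\]

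Finally I would specialize. Taking $p=0$ gives $\Lambda^{(0)}=\Lambda$ on the left, $\Lambda^{(q)}(\cdot;r,p-q)=\Lambda^{(q)}(\cdot;r,-q)$ on the right, and prefactor $\tfrac{\lambda}{-(q+\lambda)}=-\tfrac{\lambda}{\lambda+q}$, which after sign reorganization is precisely \eqref{Lem1}. Taking $p=q$ gives $\Lambda^{(q)}$ on the left, uses $\mathcal{W}^{(q,0)}=W_q$ so that $\Lambda^{(q)}(\cdot;r,0)=\Lambda^{(q)}(\cdot;r)$, and prefactor $\tfrac{\lambda}{-\lambda}=-1$, delivering \eqref{Lem2}. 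I do not expect any deep obstacle here: the only points requiring care are the justification of the Tonelli interchange (immediate from nonnegativity) and the bookkeeping of the prefactors, together with the observation that neither specialization meets the apparent pole at $p=q+\lambda$ in \eqref{LL1}, since $q+\lambda\neq 0$ and $\lambda\neq 0$ under the standing hypotheses $\lambda>0$, $q\geq 0$. The mild subtlety is simply matching the superscript conventions ($\Lambda$ versus $\Lambda^{(q)}$) between the two statements through the two values of $p$.
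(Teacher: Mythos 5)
Your argument is correct and is essentially the paper's own proof: the paper likewise interchanges the expectation with the $z$-integral by Tonelli, substitutes \eqref{LL1} with $p=0$ and $p=q$ respectively, and identifies the resulting integrals as $\Lambda^{(q)}(\cdot\,;r,-q)$, $\Lambda^{(q)}(\cdot\,;r,\lambda)$ and $\Lambda^{(q)}(\cdot\,;r)$ via \eqref{Exp2}; your only variation is to package the two cases as a single identity in a free parameter $p$ before specializing. One bookkeeping remark: as displayed, your master identity attaches the prefactor $\frac{\lambda}{p-(q+\lambda)}$ to the bracket $\Lambda^{(q)}(x;r,p-q)-\Lambda^{(q)}(x;r,\lambda)-\frac{Z_{q}(x,\Phi_{\lambda+q})}{Z_{q}(b,\Phi_{\lambda+q})}\bigl(\Lambda^{(q)}(b;r,p-q)-\Lambda^{(q)}(b;r,\lambda)\bigr)$, whereas \eqref{LL1} gives this bracket with prefactor $\frac{\lambda}{(q+\lambda)-p}$; with that sign corrected, the specializations $p=0$ and $p=q$ yield \eqref{Lem1} and \eqref{Lem2} exactly as you describe.
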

\begin{proof}
Using \eqref{LL1} for $q=0$ and Tonelli’s theorem, we obtain
\begin{multline*}
\mathbb{E}_{x}\left[\mathrm{e}^{-qT_{0}^{-}}\Lambda(X_{T_{0}^{-}},r) \ind_{\left\{ T_{0}^{-}<\tau _{b}^{+}\right\}}\right]\\  =\mathbb{E}_{x}\left[ \mathrm{e}^{-qT_{0}^{-}}\int_{0}^{\infty
}W\left( X_{T_{0}^{-}}+z\right) \frac{z}{r}\mathbb{P}\left( X_{r}\in \mathrm{%
d}z\right)\ind_{\left\{ T_{0}^{-}<\tau _{b}^{+} \right\} }\right]  \\
=\frac{\lambda }{\lambda +q}\int_{0}^{\infty }\left( \mathcal{W}_{x}^{\left( q,-q\right) }\left(
x+z\right) -\mathcal{W}_{x}^{\left( q,\lambda \right) }\left( x+z\right)
\right) \frac{z}{r}\mathbb{P}\left( X_{r}\in \mathrm{d}z\right)  \\
-\frac{\lambda }{\lambda +q}\frac{Z_{q}\left( x,\Phi _{\lambda +q}\right) }{Z_{q}\left( b,\Phi
_{\lambda +q}\right) }\int_{0}^{\infty }\left( \mathcal{W}_{b}^{\left(
q,-q\right) }\left( b+z\right) -\mathcal{W}_{b}^{\left(q,\lambda \right)
}\left( b+z\right) \right) \frac{z}{r}\mathbb{P}\left( X_{r}\in \mathrm{d}%
z\right). 
\end{multline*}
Also, using \eqref{LL2} with $p=q$ and Tonelli’s theorem, we have
\begin{multline*}
\mathbb{E}_{x}\left[\mathrm{e}^{-qT_{0}^{-}}\Lambda^{(q)}(X_{T_{0}^{-}},r) \ind_{\left\{ T_{0}^{-}<\tau _{b}^{+}\right\}}\right]\\=\mathbb{E}_{x}\left[ \mathrm{e}^{-qT_{0}^{-}}\int_{0}^{\infty
}W_{q}\left( X_{T_{0}^{-}}+z\right) \frac{z}{r}\mathbb{P}\left( X_{r}\in 
\mathrm{d}z\right)\ind_{\left\{ T_{0}^{-}<\tau_{b}^{+} \right\} }\right]  \\
=\int_{0}^{\infty }\left( \mathcal{W}_{x}^{\left(
q,0\right) }\left( x+z\right) -\mathcal{W}_{x}^{\left( q,\lambda \right)
}\left( x+z\right) \right) \frac{z}{r}\mathbb{P}\left( X_{r}\in \mathrm{d}%
z\right)  \\
-\frac{Z_{q}\left( x,\Phi _{\lambda +q}\right) }{%
Z_{q}\left( b,\Phi _{\lambda +q}\right) }\int_{0}^{\infty }\left( \mathcal{W}%
_{b}^{\left( q,0\right) }\left( b+z\right) -\mathcal{W}_{b}^{\left(
q,\lambda \right) }\left( b+z\right) \right) \frac{z}{r}\mathbb{P}\left(
X_{r}\in \mathrm{d}z\right),
\end{multline*}
and the result follows.
\end{proof} 
\subsection{Proof of Theorem~\ref{ruinLap}}
 The steps of the proof of Theorem~\ref{ruinLap} is based on the new Lemma~\ref{mainlem} together with standard probabilistic decompositions.
For $x<0$, from the strong Markov property and the fact that X is skip-free upward, we have
\begin{eqnarray}\label{xnegLap}
\mathbb{E}_{x}\left[ \mathrm{e}^{-q\parisiantime_{r}^{\lambda }}\ind_{\left\{\parisiantime_{r}^{\lambda }<\tau _{b}^{+}\right\} }\right] =\me^{-qr}\p_x\left(\tau^{+}_{0}>r \right)+\mathbb{E}\left[ \mathrm{e}
^{-q\parisiantime _{r}^{\lambda }}\ind_{\left\{\parisiantime_{r}^{\lambda }<\tau
_{b}^{+}\right\} }\right] \mathbb{E}_{x}\left[ \mathrm{e}^{-q\tau
_{0}^{+}}\ind_{\left\{ \tau _{0}^{+}<r\right\} }\right]. 
\end{eqnarray}
Consequently, for $0\leq x\leq b$, we get
\begin{eqnarray*}
\mathbb{E}_{x}\left[ \mathrm{e}^{-q\parisiantime_{r}^{\lambda }}\ind_{\left\{\parisiantime_{r}^{\lambda }<\tau _{b}^{+}\right\} }\right]  &=&\e_{x}\left[ \mathrm{e}^{-qT_{0}^{-}}\e_{X_{T^{-}_{0}}}%
\left[ \mathrm{e}^{-q\parisiantime_{r}^{\lambda }}\ind_{\left\{\parisiantime_{r}^{\lambda}<\tau _{b}^{+}\right\} }\right]\ind_{\left\{ T_{0}^{-}<\tau _{b}^{+}\right\} }\right].
\end{eqnarray*}
Injecting~\eqref{xnegLap} in the last expectation, we have, for all $x\in\reals$
\begin{eqnarray}\label{allx}
\mathbb{E}_{x}\left[ \mathrm{e}^{-q\parisiantime_{r}^{\lambda }}\ind_{\left\{\parisiantime_{r}^{\lambda }<\tau _{b}^{+}\right\} }\right]  &=&\mathrm{e}^{-qr}\mathbb{E}_{x}%
\left[ \mathrm{e}^{-qT_{0}^{-}}\mathbb{P}_{X_{T_{0}^{-}}}\left( \tau
_{0}^{+}>r\right)\ind_{\left\{ T_{0}^{-}<\tau _{b}^{+}\right\} }\right] \nonumber \\
&&+\mathbb{E}\left[ \mathrm{e}^{-q\parisiantime_{r}^{\lambda }}\ind_{\left\{\parisiantime_{r}^{\lambda }<\tau _{b}^{+}\right\} }\right] \mathbb{E}_{x}\left[ \mathrm{e}%
^{-qT_{0}^{-}}\mathbb{E}_{X_{T_{0}^{-}}}\left[ \mathrm{e}^{-q\tau _{0}^{+}}\ind_{%
\left\{ \tau _{0}^{+}<r\right\} }\right]\ind_{\left\{ T_{0}^{-}<\tau
_{b}^{+}\right\} }\right] \nonumber \\
&=&\mathrm{e}^{-qr}\mathbb{E}_{x}\left[ \mathrm{e}^{-qT_{0}^{-}}\ind_{\left\{
T_{0}^{-}<\tau _{b}^{+}\right\} }\right]\nonumber \\
&&-\me^{-qr}\mathbb{E}_{x}\left[\mathrm{e}^{-qT_{0}^{-}}\Lambda(X_{T_{0}^{-}},r) \ind_{\left\{ T_{0}^{-}<\tau _{b}^{+}\right\}}\right]\nonumber \\
&&+\me^{-qr}\mathbb{E}\left[ \mathrm{e}^{-q\parisiantime_{r}^{\lambda }}\ind_{\left\{\parisiantime_{r}^{\lambda }<\tau _{b}^{+}\right\} }\right] \mathbb{E}_{x}\left[\mathrm{e}^{-qT_{0}^{-}}\Lambda^{(q)}(X_{T_{0}^{-}},r) \ind_{\left\{ T_{0}^{-}<\tau _{b}^{+}\right\}}\right],
\end{eqnarray}
where in the last equality we used identity \eqref{L22}. For $x=0$ and using the last equation
\begin{equation}\label{lapx=0}
\mathbb{E}\left[ \mathrm{e}^{-q\parisiantime_{r}^{\lambda }}\ind_{\left\{\parisiantime_{r}^{\lambda }<\tau _{b}^{+}\right\} }\right] =\frac{\mathrm{e}^{-qr}\mathbb{E}%
\left[ \mathrm{e}^{-qT_{0}^{-}}\ind_{\left\{ T_{0}^{-}<\tau _{b}^{+}\right\} }%
\right] -\me^{-qr}\mathbb{E}\left[\mathrm{e}^{-qT_{0}^{-}}\Lambda(X_{T_{0}^{-}},r) \ind_{\left\{ T_{0}^{-}<\tau _{b}^{+}\right\}}\right]}{1-\me^{-qr}\mathbb{E}\left[\mathrm{e}^{-qT_{0}^{-}}\Lambda^{(q)}(X_{T_{0}^{-}},r) \ind_{\left\{ T_{0}^{-}<\tau _{b}^{+}\right\}}\right] }.
\end{equation}
From \eqref{L11}, \eqref{Lem1} and \eqref{Lem2}, we have
\begin{equation}\label{num}
\mathbb{E}\left[\mathrm{e}^{-qT_{0}^{-}}\Lambda(X_{T_{0}^{-}},r) \ind_{\left\{ T_{0}^{-}<\tau _{b}^{+}\right\}}\right]=\frac{\lambda }{\lambda +q}\left(1-\mathrm{e}^{\left(\lambda +q\right) r} -\frac{\Lambda^{(q)}\left(b;r,-q\right)-\Lambda^{(q)}\left(b;r,\lambda\right)}{Z_{q}\left( b,\Phi _{\lambda +q}\right) }\right) ,
\end{equation}
and
\begin{equation}\label{den}
\mathbb{E}\left[\mathrm{e}^{-qT_{0}^{-}}\Lambda^{(q)}(X_{T_{0}^{-}},r) \ind_{\left\{ T_{0}^{-}<\tau _{b}^{+}\right\}}\right] = \me^{qr}-\mathrm{e}^{ (\lambda+q) r} -\frac{\Lambda^{(q)}\left(b;r\right)-\Lambda^{(q)}\left(b;r,\lambda\right) }{Z_{q}\left( b,\Phi _{\lambda +q}\right) }.
\end{equation}
Plugging \eqref{num} and \eqref{den} in \eqref{lapx=0}, we get
\begin{eqnarray}\label{x==0}
\mathbb{E}\left[ \mathrm{e}^{-q\parisiantime_{r}^{\lambda }}\ind_{\left\{\parisiantime_{r}^{\lambda }<\tau _{b}^{+}\right\} }\right]  &=&\frac{\lambda }{\lambda +q}\left( 1-\frac{Z_{q}\left( b\right) +\Lambda^{(q)}\left( b;r\right) -\Lambda^{(q)}\left( b;r,-q \right) }{Z_{q}\left(
b,\Phi _{\lambda +q}\right) \mathrm{e}^{\left( \lambda +q\right) r}+\Lambda
^{(q)}\left( b;r\right) -\Lambda ^{(q)}\left( b;r,\lambda \right) }\right)\notag \\
&=&\frac{\lambda }{\lambda +q}\left( 1-\frac{S^{(q,\lambda)}\left( b,r\right) }{\Theta^{\left( q \right) }\left( b;r,\lambda\right) }\right). 
\end{eqnarray}
We finally obtain the result by plugging the last expectation in \eqref{allx} together with \eqref{Lem1} and \eqref{Lem2}. \\
To prove \eqref{th2}, we use again the strong Markov property and spectral negativity of $X$, we have for $x<0$ 
\begin{equation}\label{xpos}
\mathbb{E}_{x}\left[ \mathrm{e}^{-q\tau _{b}^{+}}\ind_{\left\{ \tau
_{b}^{+}<\parisiantime_{r}^{\lambda }\right\} }\right] =\mathbb{E}\left[ \mathrm{e}^{-q\tau _{b}^{+}}\ind_{\left\{ \tau_{b}^{+}<\parisiantime_{r}^{\lambda }\right\} }\right] \mathbb{E}_{x}\left[ \mathrm{e}^{-q\tau _{0}^{+}}\ind_{\left\{ \tau
_{0}^{+}<r\right\} }\right].
\end{equation}
For $0\leq x\leq b$ and using \eqref{ParLap1} and \eqref{xpos}, we get 
\begin{eqnarray*}
\mathbb{E}_{x}\left[ \mathrm{e}^{-q\tau _{b}^{+}}\ind_{\left\{ \tau
_{b}^{+}<\parisiantime_{r}^{\lambda }\right\} }\right]  &=&\mathbb{E}_{x}\left[ 
\mathrm{e}^{-q\tau _{b}^{+}}\ind_{\left\{ \tau _{b}^{+}<T_{0}^{-}\right\} }%
\right] +\mathbb{E}_{x}\left[ \mathrm{e}^{-qT_{0}^{-}}\mathbb{E}_{X_{T^{-}_{0}}}\left[ 
\mathrm{e}^{-q\tau _{b}^{+}}\ind_{\left\{ \tau _{b}^{+}<\parisiantime_{r}^{\lambda
}\right\} }\right]\ind_{\left\{ T_{0}^{-}<\tau _{b}^{+}\right\} }\right]  \\
&=&\mathbb{E}_{x}\left[ \mathrm{e}^{-q\tau _{b}^{+}}\ind_{\left\{ \tau
_{b}^{+}<T_{0}^{-}\right\} }\right]  \\
&&+\mathbb{E}\left[ \mathrm{e}^{-q\tau _{b}^{+}}\ind_{\left\{ \tau
_{b}^{+}<\parisiantime_{r}^{\lambda }\right\} }\right] \mathbb{E}_{x}\left[ \mathrm{e}%
^{-qT_{0}^{-}}\mathbb{E}_{_{X_{T_{0}^{-}}}}\left[ \mathrm{e}^{-q\tau
_{0}^{+}}\ind_{\left\{ \tau _{0}^{+}<r\right\} }\right]\ind_{\left\{
T_{0}^{-}<\tau _{b}^{+}\right\} }\right]  \\
&=&\frac{Z_{q}\left( x,\Phi _{\lambda +q}\right) }{Z_{q}\left( b,\Phi
_{\lambda +q}\right) }\\
&&+\me^{-qr}\mathbb{E}\left[ \mathrm{e}^{-q\tau
_{b}^{+}}\ind_{\left\{ \tau _{b}^{+}<\parisiantime_{r}^{\lambda }\right\} }\right] 
\mathbb{E}_{x}\left[\mathrm{e}^{-qT_{0}^{-}}\Lambda^{(q)}(X_{T_{0}^{-}},r) \ind_{\left\{ T_{0}^{-}<\tau _{b}^{+}\right\}}\right]. 
\end{eqnarray*}
Setting $x=0$, yields
\begin{equation*}
\mathbb{E}\left[ \mathrm{e}^{-q\tau _{b}^{+}}\mathbf{1}_{\left\{ \tau
_{b}^{+}<\parisiantime_{r}^{\lambda }\right\} }\right] =\frac{\frac{1}{%
Z_{q}\left( b,\Phi _{\lambda +q}\right) }}{\mathrm{e}^{\lambda r}+\mathrm{e}%
^{-qr}\frac{\Lambda ^{(q)}\left( b;r\right) -\Lambda ^{(q)}\left(
b;r,\lambda \right) }{Z_{q}\left( b,\Phi _{\lambda +q}\right) }}=\frac{%
\mathrm{e}^{qr}}{\Theta ^{\left( q \right) }\left( b;r,\lambda\right) }.
\end{equation*}
Then, putting all the pieces together, we have 
\begin{eqnarray*}
\mathbb{E}_{x}\left[ \mathrm{e}^{-q\tau _{b}^{+}}\mathbf{1}_{\left\{ \tau
_{b}^{+}<\parisiantime_{r}^{\lambda }\right\} }\right]  &=&\frac{Z_{q}\left(
x,\Phi _{\lambda +q}\right) }{Z_{q}\left( b,\Phi _{\lambda +q}\right) }+%
\frac{\mathbb{E}_{x}\left[\mathrm{e}^{-qT_{0}^{-}}\Lambda^{(q)}(X_{T_{0}^{-}},r) \ind_{\left\{ T_{0}^{-}<\tau _{b}^{+}\right\}}\right] }{\Theta ^{\left( q\right) }\left(b;r,\lambda\right) } \\
&=&\frac{\Theta ^{\left( q \right) }\left( x;r,\lambda\right) }{\Theta^{\left( q\right) }\left( b;r,\lambda\right) }.
\end{eqnarray*}%
To deal with the limit as $b\rightarrow \infty $, we apply the same machinery using identity \eqref{LL2}. We can also compute the limit directly using \eqref{CM1} and \eqref{limitZW} and the fact that 
\begin{eqnarray*}
\lim_{b\rightarrow \infty }\frac{\mathcal{W}_{z}^{\left( q,\lambda \right)
}\left( b+z\right) }{W_{q}\left( b\right) } &=&\me^{\Phi_{q}z}+\lambda \lim_{b\rightarrow \infty }\int_{0}^{z}\frac{W_{q}\left( b+z-y\right) }{W_{q}\left( b\right) }%
W_{ q+\lambda}\left( y\right) \md y\qquad  \\
&=&\me^{\Phi_{q}z}\left( 1+\lambda \int_{0}^{z}\me^{-\Phi_{q}y}W_{q+\lambda }\left( y\right) \md y\right) =Z_{q+\lambda }\left( z,\Phi
_{q}\right) .
\end{eqnarray*}
Then,
\begin{eqnarray*}
\lim_{b\rightarrow \infty }\frac{\mathcal{S}^{\left( q \right)
}\left( b,r\right) }{\Theta ^{\left( q \right) }\left( b;r,\lambda\right) }
&=&\lim_{b\rightarrow \infty }\frac{\mathcal{S}^{\left( q \right)
}\left( b,r\right) /W_{q}\left( b\right) }{\Theta ^{\left( q \right)
}\left( b;r,\lambda\right) /W_{q}\left( b\right) } \\
&=&\frac{q/\Phi _{q}-\int_{0}^{\infty }\left( Z\left( z,\Phi _{q}\right) -%
\mathrm{e}^{\Phi _{q}z}\right) \frac{z}{r}\mathbb{P}\left( X_{r}\in \mathrm{d%
}z\right) }{\mathrm{e}^{\left( \lambda +q\right) r}\lambda /\left( \Phi
_{\lambda +q}-\Phi _{q}\right) -\int_{0}^{\infty }\left( Z_{q+\lambda
}\left( z,\Phi _{q}\right) -\mathrm{e}^{\Phi _{q}z}\right) \frac{z}{r}%
\mathbb{P}\left( X_{r}\in \mathrm{d}z\right) }.
\end{eqnarray*}

%%%%%%%%%%%%%%%%%%
%%%%%%%%%%%%%%%%%%
%%%%%%%%%%%%%%%%%%
\section{Example}
We now present an example for which we can compute easily the probability of Parisian ruin as given in Equation~\eqref{ruinP1v2}. Note that, to use the formula in~\eqref{ruinP1v2}, one needs to have an expression for the scale function and the distribution of the underlying Lévy risk process $X$.

\subsection{Brownian risk process}

Let $X$ be a Brownian risk processes, i.e.\ 
\begin{equation*}
X_t - X_0 = c t + B_t ,
\end{equation*}
where $B=\{B_t, t\geq 0\}$ is a standard Brownian motion.

In this case, for $x\geq 0$ and $q>0$, the scale functions are given by 
\begin{align*}
\Phi _{\lambda }& = \sqrt{c^{2}+2\lambda}-c, \\
W(x)& =\frac{1}{c}\left( 1-\mathrm{e}^{-2cx}\right) ,%
\text{ \ }W_\lambda(x)=\frac{1}{\Phi _{\lambda }+c}\left( \mathrm{e}^{\Phi _{\lambda }x}-\mathrm{e}^{-x\left( \Phi _{\lambda}+2c\right) }\right) , \\
Z_{\lambda }(x)& =\frac{\lambda }{\Phi _{\lambda }+c}\left( \frac{%
\mathrm{e}^{\Phi _{\lambda }x}}{\Phi _{\lambda }}+\frac{\mathrm{e}^{-\left(
\Phi _{\lambda }+2c\right) x}}{\Phi _{\lambda }+2c}%
\right) , \\
Z\left( x,\Phi _{\lambda }\right) & =\frac{\lambda }{c}\left( \frac{1}{\Phi
_{\lambda }}-\frac{\mathrm{e}^{-2cx}}{\Phi _{\lambda }+2c}\right),
\end{align*}
Using \eqref{eq:convsnlp}, we have
\begin{eqnarray*}
\mathcal{W}_{z}^{\left( \lambda ,-\lambda \right) }\left( x+z\right)  &=& W(x+z)+\lambda \int_{0}^{z}W\left( x+z-y\right) W_{\lambda }\left( y\right) 
\mathrm{d}y\\
&=&\mathrm{e}^{\Phi _{\lambda }z}\left( \frac{\lambda }{c\Phi _{\lambda }\left( \Phi
_{\lambda }+c\right) }-\frac{\lambda \mathrm{e}^{-2cx}}{c\left( \Phi
_{\lambda }+2c\right)\left(\Phi _{\lambda }+c\right) }\right) 
\\
&&+\mathrm{e}^{-\left( \Phi _{\lambda }+2c\right) z}\left( \frac{\lambda }{c\left( \Phi _{\lambda }+c\right)
\left( \Phi _{\lambda }+2c\right) }-\frac{\lambda \mathrm{e}%
^{-2cx}}{c \Phi _{\lambda }\left( \Phi _{\lambda }+c\right) }\right)  \\
&=&\mathrm{e}^{\Phi _{\lambda }z}A_{1}(x)+\mathrm{e}^{-\left( \Phi _{\lambda
}+2c\right) z}A_{2}(x),
\end{eqnarray*}
where
\begin{eqnarray*}
A_{1}(x) &=&\frac{\lambda }{c\Phi _{\lambda }\left( \Phi
_{\lambda }+c\right) }-\frac{\lambda \mathrm{e}^{-2cx}}{c\left( \Phi
_{\lambda }+2c\right)\left(\Phi _{\lambda }+c\right) }, \\
A_{2}(x) &=&\frac{\lambda }{c\left( \Phi _{\lambda }+c\right)
\left( \Phi _{\lambda }+2c\right) }-\frac{\lambda \mathrm{e}%
^{-2cx}}{c \Phi _{\lambda }\left( \Phi _{\lambda }+c\right) }.
\end{eqnarray*}
%
%From Loeffen et al \cite{loeffenetal2014}, we have 
%\begin{equation*}
%\e\left[ X_{1}\right]\int_{0}^{\infty }W\left( x+z\right) \frac{z}{r}\mathbb{P}\left(
%X_{r}\in \mathrm{d}z\right) =\mathrm{e}^{-2cx}\left( \frac{1}{\sqrt{2\pi r}}%
%\mathrm{e}^{-c^{2}r/2}+c\mathcal{N}\left( -c\sqrt{r}\right) \right) 
%\end{equation*}
%where $\mathcal{N}$ is the cumulative distribution. \\
Making the change of variable $y=\left( z-r\sqrt{c^{2}+2\lambda }\right) /%
\sqrt{r}$, we have 
\begin{eqnarray}
&&\frac{1}{\sqrt{2\pi r}}\int_{0}^{\infty }\mathrm{e}^{\Phi _{\lambda }z}%
\frac{z}{r}\mathrm{e}^{-\left( z-cr\right) ^{2}/\left( 2r\right) }\mathrm{d}z
\notag  \label{1} \\
&=&\frac{1 }{\sqrt{2r\pi }}\mathrm{e}^{-rc^{2}/ 2 }+e\,^{r\lambda }\left(\Phi _{\lambda}+c\right)\mathcal{N}\left(\sqrt{r}\left(\Phi _{\lambda
}+c\right) \right)   \notag \\
&=&\Psi _{1}\left( c,r,\lambda \right) ,
\end{eqnarray}%
and, setting $y=-\left( z+r\sqrt{c^{2}+2\lambda }\right) /\sqrt{r}$, we get 
\begin{eqnarray}
&&\frac{1}{\sqrt{2\pi r}}\int_{0}^{\infty }\mathrm{e}^{-\left( \Phi
_{\lambda }+2c\right) z}\frac{z}{r}\mathrm{e}^{-\left(
z-cr\right) ^{2}/\left( 2r\right) }\mathrm{d}z  \notag  \label{2} \\
&=&\frac{1 }{\sqrt{2r\pi }}\mathrm{e}^{-rc^{2}/2 }-\mathrm{e}^{r\lambda }\left(\Phi _{\lambda}-c\right) \mathcal{N}\left( -\sqrt{r}\left( \Phi _{\lambda
}-c\right) \right)   \notag \\
&=&\Psi _{2}\left( c,r,\lambda \right) ,
\end{eqnarray}%
where $\mathcal{N}$ is the cumulative distribution of the standard normal
distribution. Using~\eqref{1} and~\eqref{2}, we obtain 
\begin{equation*}
\mathbb{E}\left[ X_{1}\right] \int_{0}^{\infty }\mathcal{W}_{z}^{\left(
\lambda ,-\lambda \right) }\left( x+z\right) \frac{z}{r}\mathbb{P}\left(
X_{r}\in \mathrm{d}z\right) =A_{1}(x)\Psi _{1}\left( c,r,\lambda \right)
+A_{2}(x)\Psi _{2}\left( c,r,\lambda \right) .
\end{equation*}%
Also, we have 
\begin{equation*}
\mathbb{E}\left[ X_{1}\right] \Lambda \left( x,r\right) =\left( 1-\mathrm{e}%
^{-2xc}\right) \frac{ \mathrm{e}^{-c^{2}r/2 }}{\sqrt{2r\pi }}+c\mathcal{N}\left(\sqrt{r}c\right) \left( 1+\mathrm{e}^{-2xc}\right) ,
\end{equation*}%
while the denominator of \eqref{ruinP1} is given by 
\begin{eqnarray*}
\int_{0}^{\infty }\left( Z_{q}(z)-1\right) \frac{z}{r}\mathbb{P}\left(
X_{r}\in \mathrm{d}z\right)&=&\frac{\lambda }{\Phi _{\lambda }(\Phi _{\lambda }+c)}\Psi
_{1}\left( c,r,\lambda \right) +\frac{\lambda }{\left( \Phi
_{\lambda }+c\right) \left( \Phi _{\lambda }+2c\right) }\Psi
_{2}\left( c,r,\lambda \right)  \\
&&-\left( \frac{1}{\sqrt{2r\pi }}\mathrm{e}^{-c^{2}r/2 }+c\mathcal{N}\left(c\sqrt{r}\right) \right) .
\end{eqnarray*}%

Putting all the pieces together, we get an expression for the probability of Parisian ruin in \eqref{ruinP1}.
\section{Acknowledgements}
Funding in support of this work was provided by the Institut des sciences math\'ematiques (ISM) and the Faculté des sciences at UQAM (PhD scholarships). The author thanks two anonymous referees for their very helpful comments and suggestions.
\bibliographystyle{alpha}
\bibliography{REFERENCES}

\end{document}